\newtheorem{thm}{Theorem}[section]
\newtheorem{lemma}[thm]{Lemma}
\newtheorem{prop}[thm]{Proposition}
\newtheorem{defn}[thm]{Definition}
\theoremstyle{definition}
\newtheorem{question}[thm]{Question}
\newtheorem*{ack}{Acknowledgement}
\title{A metric sphere not a quasisphere but for which every weak tangent is Euclidean}
\author{Angela WU}
\address{Angela Wu\\Department of Mathematics\\
	University of California\\ Los Angeles\\ CA90095 \\USA} \email{aawu@math.ucla.edu}
\thanks{The author was partially supported by NSF grants DMS-1506099 and DMS-1266164}
\date{June 7, 2018}
\begin{document}
\newcommand{\floor}[1]{\lfloor #1 \rfloor}
\newcommand{\ceil}[1]{\lceil #1 \rceil}
\newcommand{\tr}{\operatorname{tr}}
\newcommand{\Tr}{\operatorname{Tr}}
\newcommand{\GL}{\operatorname{GL}}
\newcommand{\Card}{\operatorname{Card}}
\newcommand{\card}{\operatorname{card}}
\newcommand{\onto}[1][]{\ifthenelse{\equal{#1}{}}{\twoheadrightarrow}{\overset{#1}\twoheadrightarrow}}
\newcommand{\into}[1][]{\ifthenelse{\equal{#1}{}}{\hookrightarrow}{\overset{#1}\hookrightarrow}}
\newcommand{\dive}{\operatorname{div}}
\newcommand{\grad}{\nabla}
\newcommand{\coker}{\operatorname{coker}}
\newcommand{\wto}{\rightharpoonup}
\newcommand{\dd}{\mathrm{d}}
\newcommand{\Gal}{\operatorname{Gal}}
\newcommand{\rank}{\operatorname{rank}}
\newcommand{\Aut}{\operatorname{Aut}}
\newcommand{\Mor}{\operatorname{Mpr}}
\newcommand{\Hom}{\operatorname{Hom}}
\newcommand{\End}{\operatorname{End}}
\newcommand{\Hol}{\operatorname{Hol}}
\newcommand{\Var}{\operatorname{Var}}
\newcommand{\Cor}{\operatorname{Cor}}
\newcommand{\Cov}{\operatorname{Cov}}
\newcommand{\ind}{\operatorname{ind}}
\newcommand{\ch}{\operatorname{char}}
\newcommand{\vep}{\varepsilon}
\newcommand{\Image}{\operatorname{Im}}
\newcommand{\im}{\operatorname{im}}
\newcommand{\sgn}{\operatorname{sgn}}
\newcommand{\Id}{\operatorname{Id}}
\newcommand{\id}{\operatorname{id}}
\newcommand{\Ell}{\mathscr{L}}
\newcommand{\CH}{\what{\C}}
\newcommand{\R}{\mathbb{R}}
\newcommand{\C}{\mathbb{C}}
\newcommand{\E}{\mathbb{E}}
\newcommand{\Q}{\mathbb{Q}}
\newcommand{\D}{\mathbb{D}}
\newcommand{\Z}{\mathbb{Z}}
\newcommand{\N}{\mathbb{N}}
\newcommand{\F}{\mathbb{F}}
\newcommand{\bP}{\mathbb{P}}
\newcommand{\T}{\mathbb{T}}
\newcommand{\supp}{\operatorname{supp}}
\newcommand{\spn}{\operatorname{span}}
\renewcommand{\H}{\mathbb{H}}
\newcommand{\fR}{\mathfrak{R}}
\newcommand{\br}[1]{\left( #1 \right)} 
\newcommand{\abs}[1]{\left| #1 \right|} 
\newcommand{\Lpnorm}[2]{\| #1 \|_{L^{#2}}}
\newcommand{\norm}[1]{\left\lVert#1\right\rVert} 
\newcommand{\angleb}[1]{\left\langle #1 \right\rangle} 
\newcommand{\abr}[1]{\left\langle #1 \right\rangle}
\newcommand{\bc}[1]{\left\lbrace #1 \right\rbrace}
\newcommand{\M}{\mathfrak{M}} 
\newcommand{\fa}{\mathfrak{a}} 
\newcommand{\bs}[1][]{\ifthenelse{\equal{#1}{}}{\backslash}{\backslash\{#1\}}}
\newcommand{\ra}[1]{\overset{#1}\longrightarrow}
\newcommand{\actson}{\curvearrowright}
\newcommand{\cA}{\mathcal{A}}
\newcommand{\cB}{\mathcal{B}}
\newcommand{\cC}{\mathcal{C}}
\newcommand{\cD}{\mathcal{D}}
\newcommand{\cE}{\mathcal{E}}
\newcommand{\cF}{\mathcal{F}}
\newcommand{\cG}{\mathcal{G}}
\newcommand{\cH}{\mathcal{H}}
\newcommand{\cK}{\mathcal{K}}
\newcommand{\cL}{\mathcal{L}}
\newcommand{\cM}{\mathcal{M}}
\newcommand{\cN}{\mathcal{N}}
\newcommand{\cP}{\mathcal{P}}
\newcommand{\cS}{\mathcal{S}}
\newcommand{\cT}{\mathcal{T}}
\newcommand{\cU}{\mathcal{U}}
\newcommand{\cV}{\mathcal{V}}
\newcommand{\cW}{\mathcal{W}}
\newcommand{\cX}{\mathcal{X}}
\newcommand{\cY}{\mathcal{Y}}
\newcommand{\cZ}{\mathcal{Z}}
\newcommand{\fX}{\mathfrak{X}}
\newcommand{\Ob}{Ob}
\newcommand{\fm}{\mathfrak{m}} 
\newcommand{\fp}{\mathfrak{p}} 
\newcommand{\m}{\mathfrak{m}} 
\newcommand{\p}{\mathfrak{p}} 
\newcommand{\what}[1]{\widehat{#1}}
\newcommand{\wtil}[1]{\widetilde{#1}}
\newcommand{\wbar}[1]{\overline{#1}}
\newcommand{\ub}[1]{\overline{#1}} 
\newcommand{\lb}[1]{\underline{#1}} 
\renewcommand{\mod}{\operatorname{mod}}
\newcommand{\diam}{\operatorname{diam}}

\begin{abstract}
We show that for all $n \geq 2$, there exists a doubling linearly locally contractible metric space $X$ that is topologically a $n$-sphere such that every weak tangent is isometric to $\R^n$ but $X$ is not quasisymmetrically equivalent to the standard $n$-sphere. The same example shows that $2$-Ahlfors regularity in Theorem 1.1 of \cite{BK02} on quasisymmetric uniformization of metric $2$-spheres is optimal.
\end{abstract}

\maketitle

\section{Introduction}

Let $(X, d_X)$ and $(Y, d_Y)$ be two metric spaces. A homeomorphism $f: X \to Y$ is a quasisymmetry if there exists a homeomorphism $\eta:[0,\infty) \to [0,\infty)$ such that for all $x,y,z \in X$, with $x \neq z$, we have 
\begin{align*}
\frac{d_Y(f(x), f(y))}{d_Y(f(x), f(z))} \leq \eta\br{ \frac{d_X(x,y)}{d_X(x,z)}}.
\end{align*}
In this case we say that $X$ and $Y$ are quasisymmetrically equivalent. 

In \cite{Kinneberg17} Kinneberg characterized metric circles that are quasisymmetric to the standard circle in terms of weak tangents (defined in Section 4 below):
\begin{thm}[Kinneberg, \cite{Kinneberg17}]
A doubling metric circle $\mathcal{C}$ is quasisymmetrically equivalent to the standard circle $\mathbb{S}^1$ if and only if every weak tangent of $\mathcal{C}$ quasisymmetrically equivalent to the real line $\R$ based at $0$. 
\end{thm}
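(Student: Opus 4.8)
The plan is to bracket the statement between two more concrete facts: first, that quasisymmetry interacts well with the operation of passing to weak tangents; second, that for a \emph{doubling} metric circle, being quasisymmetrically equivalent to $\mathbb{S}^1$ is the same as being of bounded turning (the Tukia--V\"ais\"al\"a characterization, noting that for Jordan curves bounded turning coincides with linear local connectedness). The forward implication then uses only the first fact, and the reverse implication combines both. Throughout, weak tangents are realized as pointed Gromov--Hausdorff limits of rescalings $(\cdot,\lambda_n d,\cdot)$ with $\lambda_n\to\infty$, and the doubling hypothesis supplies the precompactness needed to extract convergent subsequences.

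For the forward direction, suppose $f\colon\mathbb{S}^1\to\mathcal{C}$ is $\eta$-quasisymmetric and let $\hat{\mathcal{C}}$ be a weak tangent of $\mathcal{C}$ at a point $q$, obtained from scales $\lambda_n\to\infty$. I would choose companion scales $\mu_n$ on the domain by normalizing: pick $a_n$ with $d_{\mathcal{C}}(q,f(a_n))=1/\lambda_n$ (such $a_n$ exist for large $n$ and tend to $f^{-1}(q)$), and set $\mu_n=1/d_{\mathbb{S}^1}(f^{-1}(q),a_n)\to\infty$. Since the $\eta$-quasisymmetry condition is scale-invariant, the maps $f\colon(\mathbb{S}^1,\mu_n d_{\mathbb{S}^1},f^{-1}(q))\to(\mathcal{C},\lambda_n d_{\mathcal{C}},q)$ form a uniformly $\eta$-quasisymmetric, uniformly non-degenerate family fixing basepoint and sending $a_n$ to a point at rescaled distance $1$. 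Passing to a subsequence along which the domains also converge to a weak tangent $T$ of $\mathbb{S}^1$, the maps converge to a limit $\hat f\colon T\to\hat{\mathcal{C}}$ that is $\eta$-quasisymmetric, surjective, injective (because $f^{-1}$ is quasisymmetric too), and basepoint-preserving. But every weak tangent of $\mathbb{S}^1$ is isometric to $(\R,0)$, so $\hat{\mathcal{C}}$ is $\eta$-quasisymmetric to $\R$ based at $0$.

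For the reverse direction I would argue contrapositively. If the doubling circle $\mathcal{C}$ is not quasisymmetric to $\mathbb{S}^1$, then by Tukia--V\"ais\"al\"a it fails to be of bounded turning: for every $n$ there are $x_n\neq y_n$ such that both arcs $\alpha_n,\beta_n$ of $\mathcal{C}$ joining them have diameter exceeding $n\,d_{\mathcal{C}}(x_n,y_n)$. Compactness of $\mathcal{C}$ forces $t_n:=d_{\mathcal{C}}(x_n,y_n)\to 0$, so rescaling by $\lambda_n=t_n^{-1}$ and extracting a convergent subsequence of $(\mathcal{C},\lambda_n d_{\mathcal{C}},x_n)$ gives a weak tangent $\hat{\mathcal{C}}$ in which the basepoint $\hat x$ and the limit $\hat y$ of the $y_n$ satisfy $\hat d(\hat x,\hat y)=1$. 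The images of $\alpha_n$ and $\beta_n$, being connected sets through $x_n$ of rescaled diameter $>n$, meet every sphere about $x_n$ of radius $<n/2$, so their Hausdorff limits are unbounded connected subsets of $\hat{\mathcal{C}}$ joining $\hat x$ to $\hat y$; matched against the arc structure of $\R$ (where the arc between two points is the bounded segment), this shows $\hat{\mathcal{C}}$ is not of bounded turning, hence not quasisymmetric to $\R$ — contradicting the hypothesis.

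The main obstacle is the limiting argument of the forward direction: making rigorous that an $\eta$-quasisymmetry between doubling spaces descends, after the correct matching of the two families of rescaling factors, to an $\eta$-quasisymmetry between corresponding weak tangents with basepoints preserved, given that quasisymmetries distort scales nonlinearly. A smaller but genuine point is the last step of the reverse direction: certifying that the ``fat arcs'' $\alpha_n,\beta_n$ neither collapse nor escape unseen in the Gromov--Hausdorff limit, so that the failure of bounded turning is genuinely inherited by $\hat{\mathcal{C}}$; this rests on the stability of connectedness and of the diameter blow-up under Hausdorff limits in a proper, uniformly doubling setting.
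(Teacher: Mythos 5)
This theorem is quoted by the paper from \cite{Kinneberg17} and no proof is given in the paper itself, so your proposal can only be judged on its own terms. Your forward direction is the standard blow-up argument (normalize the domain scales against the image scales, use scale-invariance and equicontinuity of $\eta$-quasisymmetries to extract a limiting $\eta$-quasisymmetry between weak tangents, and use that every weak tangent of $\mathbb{S}^1$ is $(\R,0)$); modulo the usual Arzel\`a--Ascoli machinery for quasisymmetric maps, which you correctly flag, that half is sound. Reducing the reverse direction to bounded turning via Tukia--V\"ais\"al\"a is also the right move.

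The genuine gap is the last step of the reverse direction. First, the Hausdorff limits of $\alpha_n$ and $\beta_n$ need \emph{not} be connected: local Hausdorff limits of connected sets whose diameters tend to infinity can disconnect (in the model ``narrow fjord'' example, each long arc limits onto a disjoint pair of rays, one on each wall), so the objects you propose to exhibit may simply not exist. Second, even if they were connected, the existence of two unbounded connected sets each containing $\hat x$ and $\hat y$ is perfectly consistent with $\hat{\mathcal{C}}=\R$ (take $(-\infty,1]$ and $[0,\infty)$ for the points $0$ and $1$), so no contradiction with ``the arc structure of $\R$'' follows. What you actually need is that \emph{no} continuum of bounded diameter in $\hat{\mathcal{C}}$ joins $\hat x$ to $\hat y$; in the rescaled circles this holds because any connected subset of a circle containing both $x_n$ and $y_n$ contains one of the two arcs and hence has diameter $>n$, but this non-existence statement does not pass to the Gromov--Hausdorff limit for free --- a continuum in the limit cannot in general be pulled back to continua in the approximating spaces (a limit of totally disconnected sets can be connected), and filling in an $\vep$-chain inside the approximating circle by short arcs would already require the bounded turning you are trying to prove. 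Closing this requires additional quantitative input --- e.g.\ producing, via the doubling property and a pigeonhole over scales, a definite metric gap separating the two ``sides'' near $x_n$ that survives the limit and shows the weak tangent fails linear local connectedness (hence cannot be quasisymmetric to $\R$). This is the actual content of the hard direction of Kinneberg's theorem and is missing from your sketch.
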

\noindent Here a metric space is said to be doubling if for every $R > 0$, every ball $B$ of radius $2R$ can be covered by $N$ balls of radius $R$, where $N$ is a positive integer that does not depend on the choice of the ball $B$. In this paper we prove that Kinneberg's result cannot be extended to higher dimensions:
\begin{thm}\label{Theorem}
For every $n \geq 2$, there exists a doubling, linearly locally contractible metric space $X$ that is topologically an $n$-sphere such that every weak tangent is isometric to $\R^n$ but $X$ is not quasisymmetrically equivalent to the standard $n$-sphere.
\end{thm}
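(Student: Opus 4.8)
The strategy is to build $X$ by deforming the round metric on $\mathbb{S}^{n}$ inside a small coordinate ball $U$ around a fixed point $p$, leaving the metric unchanged on $\mathbb{S}^{n}\setminus U$. The deformation is carried out at a sequence of scales $t_{k}\downarrow 0$ accumulating only at $p$: at scale $t_{k}$ one inserts a ``wrinkle'' of amplitude $\vep_{k}$, supported in a spherical shell $A_{k}=\{\,t_{k+1}\le |x|\le t_{k}\,\}$ (normal coordinates at $p$) and involving only a bounded number of dyadic sub-scales of $t_{k}$, designed so that rescaling the shell by $1/t_{k}$ and letting $k\to\infty$ recovers the flat metric. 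The two parameter sequences are tuned so that
\begin{equation*}
\vep_{k}\to 0\quad\text{but}\quad \sum_{k}\vep_{k}=\infty ,
\end{equation*}
the first condition being responsible for the Euclidean weak tangents and the second for the failure of quasisymmetry. The wrinkle cannot be a bounded bi-Lipschitz perturbation — any such deformation of $\mathbb{S}^{n}$ is bi-Lipschitz, hence quasisymmetric, to $\mathbb{S}^{n}$, and likewise a global snowflaking $d\mapsto d^{\alpha}$ is quasisymmetric to $\mathbb{S}^{n}$ via the identity map — so each wrinkle must distort distances by a factor that grows without bound as one passes from scale $t_{k}$ to much larger scales, while tapering back to an isometry at the sub-scales of $t_{k}$. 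Producing a model wrinkle with all of these features, and verifying that the pieces glue to a genuine length metric $d_{X}$ on $\mathbb{S}^{n}$ inducing the original topology, is the first task. We set $X=(\mathbb{S}^{n},d_{X})$.

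The soft properties are then straightforward. Since each wrinkle is an isometry outside its shell and near the two boundary spheres of that shell, and since its amplitude $\vep_{k}$ only perturbs covering numbers and local contraction radii at scales $\asymp t_{k}$ by factors controlled by $\sup_{k}\vep_{k}<\infty$, the space $X$ is doubling and linearly locally contractible, and topologically it is an $n$-sphere. One should note that $X$ will \emph{not} be Ahlfors $n$-regular at $p$ — the measures of the balls $B_{X}(p,t_{k})$ will fail to be comparable to $t_{k}^{n}$ — which is exactly why Theorem 1.1 of \cite{BK02} does not apply, and which yields the optimality statement in the abstract when $n=2$.

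Next, every weak tangent of $X$ is isometric to $\R^{n}$. Because $X$ is doubling, every sequence of pointed rescalings $(X,r_{j}^{-1}d_{X},x_{j})$ with $r_{j}\to 0$ subconverges, so it suffices to show that for all $\vep>0$ and $R>0$ there is $r_{0}>0$ with the property that whenever $r_{j}<r_{0}$ the ball $B_{X}(x_{j},Rr_{j})$, rescaled by $1/r_{j}$, is $\vep$-Gromov--Hausdorff close to the Euclidean ball $B_{\R^{n}}(0,R)$. The only wrinkles that influence this ball are those with $t_{k}$ in a bounded multiplicative neighbourhood of $r_{j}$ and with $A_{k}$ within distance $O(Rr_{j})$ of $x_{j}$; for $r_{j}$ small all such indices $k$ are large, so the amplitudes $\vep_{k}$ are as small as we wish, and the tapering built into the model wrinkle guarantees that none of them contributes a non-trivial self-similar limit (this is the step that fails for an untapered snowflaking, whose blow-ups are snowflakes rather than $\R^{n}$). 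Summing these finitely many negligible contributions shows the rescaled ball is $\vep$-close to a round Euclidean ball, uniformly in $x_{j}$; the base point $p$ is covered by the same argument, using $\vep_{k}\to 0$. Hence every weak tangent is isometric to $\R^{n}$.

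The main obstacle is showing that $X$ is not quasisymmetrically equivalent to $\mathbb{S}^{n}$; this is where the subtlety of the construction lies, since every ``obvious'' deformation turns out to be quasisymmetrically trivial. I would argue by contradiction. An $\eta$-quasisymmetry $f\colon \mathbb{S}^{n}\to X$, localized near the pair $f^{-1}(p),p$, would have to accommodate all of the infinitely many wrinkles simultaneously subject to the single distortion function $\eta$; since the cumulative distortion forced by the wrinkles at scales $t_{k},t_{k+1},\dots,t_{k+m}$ grows without bound as $m\to\infty$ — this is precisely the content of $\sum_{k}\vep_{k}=\infty$, each wrinkle being individually $\eta$-absorbable but their composition not — one should obtain a contradiction with the homeomorphism property of $\eta$. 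To make ``cumulative distortion'' precise and quasisymmetry-invariant I would use one of the standard devices: the combinatorial $n$-modulus of the family of curves in $X$ crossing the stack of shells $A_{k}\cup\dots\cup A_{k+m}$, which is quasi-preserved under quasisymmetries in the sense of \cite{BK02} and which the wrinkles drive to $0$ (or to $\infty$) while the corresponding modulus in $\mathbb{S}^{n}$ stays bounded away from both; or a quasi-M\"obius estimate for the cross-ratio of four points of $X$ straddling many shells. The delicate point throughout is that the two requirements pull against each other: the wrinkles must be strong enough, \emph{summably}, to move the chosen invariant, yet weak enough, \emph{individually and at sub-scales}, to leave all weak tangents isometric to $\R^{n}$; threading this needle — choosing the scales $\{t_{k}\}$, the amplitudes $\{\vep_{k}\}$, and the precise model wrinkle so that both hold — is the crux of the argument.
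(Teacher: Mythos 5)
Your proposal is a program rather than a proof: the ``model wrinkle'' that everything hinges on is never constructed, and you yourself flag the verification of both key properties as ``the crux.'' More importantly, the architecture you choose --- infinitely many wrinkles of amplitude $\vep_k\to 0$ stacked in \emph{nested} shells around a single point $p$, with $\sum_k\vep_k=\infty$ supplying the obstruction --- creates a conflict that the proposal does not resolve and that I believe is fatal as stated. A ball $B_X(p,Rr_j)$ (or one centered near $p$) contains \emph{all} shells $A_k$ with $t_k\lesssim Rr_j$, i.e.\ infinitely many of them, not ``finitely many negligible contributions''; and since $\sum_{k\ge K}\vep_k=\infty$ for every $K$, the very divergence you need for non-quasisymmetry acts in full force inside every rescaled ball at $p$. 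If the cumulative effect of the inner stack is strong enough to defeat every quasisymmetry, it is far from clear why it would be invisible to the blow-ups at $p$; conversely, a slowly accumulating radial distortion (the natural way to make the blow-ups at $p$ Euclidean) tends to be quasisymmetrically trivial, exactly like the global snowflake you rule out. Nothing in the proposal threads this needle.

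The paper avoids the conflict with a different architecture. The defects are placed at \emph{separated} locations $1/n$ on a line: each is a single arc $I_n$ of Euclidean diameter $s_n$ carrying the metric $s_n\varphi_{\alpha_n,c_n}(s_n^{-1}d)$, where $\varphi_{\alpha,c}$ is linear on $[0,c]$ and behaves like $x^{\alpha}$ on $[c,1]$. Each defect is individually \emph{unbounded} in the relevant sense --- its length-to-diameter ratio $L(\alpha_n,c_n)\to\infty$ --- so non-quasisymmetry is detected at a \emph{single} defect (the $d$-modulus of the curve family crossing the box $I_n\times[0,s_n]^{d-1}$ in the product $X_d=(\R,\delta)\times\R^{d-1}$ tends to $\infty$ while the relative distance of the plates stays $1$); no accumulation over scales is needed. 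Meanwhile the weak tangents are Euclidean because $\alpha_n\to 1$ makes the additivity defect $0\le\varphi(t)+\varphi(x-t)-\varphi(x)\le(2-2^{\alpha_n})\varphi(x)$ tend to $0$, so every small ball is either inside the locally Euclidean (linear) core of one arc or meets only arcs of index $N\to\infty$; the map $x\mapsto\pm\delta(p,x)$ is then an $o(r)$-rough isometry onto an interval. This decoupling --- unbounded length distortion per defect but vanishing metric (triangle-defect) distortion per defect --- is the idea your proposal is missing, and it is what lets the two requirements coexist. As written, your argument has a genuine gap at both the construction and the weak-tangent step.
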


When $n = 2$, one can compare our result with the following Theorem:
\begin{thm}[Bonk and Kleiner, \cite{BK02}]\label{Bonk_Kleiner}
Let $Z$ be an 2-Ahlfors regular metric space homeomorphic to $\mathbb{S}^2$. Then $Z$ is quasisymmetric to $\mathbb{S}^2$ if and only if $Z$ is linearly locally contractible.
\end{thm}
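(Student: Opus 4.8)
The plan is to prove the two implications separately, with the forward (``only if'') direction being essentially routine and the reverse (``if'') direction carrying all of the difficulty.

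\emph{The easy direction.} Suppose $Z$ is quasisymmetrically equivalent to $\mathbb{S}^2$ via a homeomorphism $f: \mathbb{S}^2 \to Z$ with control function $\eta$. Since $\mathbb{S}^2$ is itself linearly locally contractible, and linear local contractibility is a quantitative quasisymmetry invariant among bounded connected spaces, $Z$ inherits the property. Concretely, I would check that $\eta$ converts a null-homotopy supported in a round ball $B\br{x,r} \subset \mathbb{S}^2$ into a null-homotopy in $Z$ whose image has diameter controlled in terms of $\diam f\br{B\br{x,r}}$, and therefore lies in a metric ball of comparably larger radius. The only subtlety is bookkeeping of constants; no new idea is required, and the standing $2$-Ahlfors regularity of $Z$ is not even needed here.

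\emph{The hard direction: discretization.} For the converse, assume $Z$ is $2$-Ahlfors regular, linearly locally contractible, and homeomorphic to $\mathbb{S}^2$; I would build a quasisymmetric map by a discretize-and-pass-to-the-limit scheme. Fix scales $\epsilon_k \to 0$, and at each scale choose a maximal $\epsilon_k$-separated net, forming a graph $G_k$ whose vertices are the net points and whose edges join points within a fixed multiple of $\epsilon_k$. Both hypotheses enter immediately: $2$-Ahlfors regularity bounds the number of net points in any ball, so $G_k$ has uniformly bounded degree, while linear local contractibility ensures that at small scales balls are connected and simply connected enough that $G_k$ is a genuine planar triangulation combinatorially approximating the topological sphere $Z$.

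\emph{The hard direction: circle packing and uniform control.} To each planar triangulation $G_k$ I would apply the Koebe--Andreev--Thurston circle packing theorem, realizing $G_k$ by a packing of round disks on the standard $\mathbb{S}^2$; this places the vertices on $\mathbb{S}^2$ and defines an approximating map $f_k: Z \to \mathbb{S}^2$. The crux is to show the $f_k$ are uniformly quasi-M\"obius (hence, after normalizing three points, uniformly quasisymmetric), with control independent of $k$. This is where the real work lies: one must prove the packing radii do not degenerate, i.e. combinatorially comparable vertices receive comparable radii. The tool is combinatorial modulus. Because $Z$ is Ahlfors $2$-regular the conformal exponent is $Q=2$, so the discrete $2$-modulus of a curve family in $G_k$ is comparable to the continuous $2$-modulus of the corresponding family in $Z$: the upper mass bound from regularity gives the upper estimate, and linear local contractibility supplies the Loewner-type lower bound preventing curve families between separated sets from having small modulus. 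Transferring these bounds through the packing, via the ring lemma and the comparison between packing radii and combinatorial distances, yields the uniform distortion estimate.

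\emph{Passing to the limit, and the main obstacle.} Finally I would normalize each $f_k$ to fix three points in general position and invoke a compactness principle for uniformly quasi-M\"obius maps (an Arzel\`a--Ascoli argument powered by the doubling property) to extract a subsequential limit $f: Z \to \mathbb{S}^2$. One then checks that $f$ inherits the uniform quasisymmetry bound and is a bijection by a degree argument, using that the $f_k$ are approximate homeomorphisms, giving the desired quasisymmetric homeomorphism. I expect the main obstacle to be precisely the uniform non-degeneracy of the packings, namely the modulus comparison and ring-lemma estimates forcing comparable radii at comparable scales. This is the step in which both hypotheses are indispensable: without the upper bound from $2$-Ahlfors regularity the $2$-modulus need not be scale-invariant, and without linear local contractibility the lower modulus bound fails, so in either case the packings could collapse and no quasisymmetric limit would survive.
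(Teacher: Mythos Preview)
The paper does not contain a proof of this theorem: it is quoted as a known result of Bonk and Kleiner, with a citation to \cite{BK02}, and is used only as context to motivate Theorem~\ref{Theorem} and Theorem~\ref{Q-AR-counter-example}. There is therefore nothing in the paper to compare your proposal against.

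That said, your outline is a faithful sketch of the original Bonk--Kleiner argument in \cite{BK02}: the forward direction is indeed a soft quasisymmetric invariance statement, and the hard direction proceeds exactly by approximating $Z$ at scales $\epsilon_k$ by combinatorial triangulations, realizing those triangulations by circle packings on $\mathbb{S}^2$ via Koebe--Andreev--Thurston, using Ahlfors $2$-regularity together with linear local contractibility to control discrete $2$-modulus and hence obtain uniform quasi-M\"obius bounds on the approximating maps, and then passing to a normalized subsequential limit. Your identification of the main obstacle---uniform non-degeneracy of the packings via modulus comparison---is also correct. So while there is no proof in the present paper for you to match, your proposal aligns with the argument in the cited source.
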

\noindent Our Theorem \ref{Theorem} shows that the conclusion of Theorem \ref{Bonk_Kleiner} is false if we replace $2$-Ahlfors regularity with $Q$-Ahlfors regularity for $Q >2$.

Our study is also related to the following theorem, proven in \cite{Lindquist17}:
\begin{thm}\label{Intro_Thm1}
Let $(Z,d)$ be a doubling metric space homeomorphic to $\mathbb{S}^2$. The following are equivalent:
\begin{enumerate}[(i)]
\item $(Z, d)$ is quasisymmetrically equivalent to the standard $2$-sphere.
\item For every $x \in Z$, there exists an open neighborhood $U$ of $x$ in $Z$ such that $U$ is quasisymmetrically equivalent to $\D$.
\end{enumerate}
\end{thm}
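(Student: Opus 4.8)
The forward implication (i)$\Rightarrow$(ii) is the soft one. If $F\colon Z\to\mathbb{S}^2$ is an $\eta$-quasisymmetry and $x\in Z$, I would choose an open spherical cap $D\subset\mathbb{S}^2$ with $F(x)\in D$ and $\overline D\ne\mathbb{S}^2$; such a cap is quasisymmetric (even bi-Lipschitz, after stereographic projection from a point outside $\overline D$) to $\D$. Put $U=F^{-1}(D)$, an open neighborhood of $x$. The restriction $F|_U\colon U\to D$ is again $\eta$-quasisymmetric, since restricting a quasisymmetry to a subset equipped with the induced metrics preserves the control function; composing with a quasisymmetry $D\to\D$ gives a quasisymmetry $U\to\D$.

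For (ii)$\Rightarrow$(i) the plan is to build, out of the local charts, a metric on $Z$ that fits the hypotheses of Theorem~\ref{Bonk_Kleiner}. Since $Z$ is compact, finitely many of the given neighborhoods cover it: fix $U_1,\dots,U_m$ with quasisymmetries $\psi_j\colon U_j\to\D$, and after replacing the $U_j$ by slightly smaller domains (still a cover, by compactness) one may assume each $\psi_j$ extends quasisymmetrically past $\overline{U_j}$, so that the transition maps $h_{ij}=\psi_j\circ\psi_i^{-1}$ are quasisymmetries between planar domains; by V\"ais\"al\"a's theorem ($n=2$) they are $K$-quasiconformal with a uniform $K$. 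Thus $Z$ carries a $K$-quasiconformal structure, which by the measurable Riemann mapping theorem is realized by a genuine conformal structure making the $\psi_j$ uniformly quasiconformal. Gluing the pulled-back Euclidean metric tensors $\psi_j^{\ast}g_{\mathrm{euc}}$ by a continuous partition of unity subordinate to $\{U_j\}$ produces a bounded measurable Riemannian metric on $Z$ of ellipticity controlled by $K$, whose length metric $\varrho$ I would then check to be Ahlfors $2$-regular (using that quasiconformal Jacobians are strong $A_\infty$ weights, so the glued volume density is locally comparable to a single such weight) and linearly locally contractible (inherited from $\D$, a local property that patches over the finite cover). One must also verify that $\mathrm{id}\colon(Z,d)\to(Z,\varrho)$ is a quasisymmetry: on each $U_j$ the metrics $d$ and $\varrho$ are quasisymmetrically equivalent because each is quasisymmetric to the Euclidean metric via $\psi_j$, and a chaining argument over the finite cover, using compactness and the uniform constants, upgrades this to a global quasisymmetry. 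Granting these points, Theorem~\ref{Bonk_Kleiner} applies to $(Z,\varrho)$, so $(Z,\varrho)$ — and hence $(Z,d)$ — is quasisymmetric to $\mathbb{S}^2$. (Essentially equivalently, one may dispense with $\varrho$: uniformize the Riemann surface $Z$ to $\widehat{\mathbb{C}}=\mathbb{S}^2$; in each chart $U_j$ the resulting homeomorphism $Z\to\mathbb{S}^2$ differs from $\psi_j$ by a uniformly quasiconformal map of planar domains, hence is quasisymmetric with uniform constants on the finite cover, and one concludes by the local-to-global principle for quasisymmetric homeomorphisms of compact connected spaces.)

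The main obstacle I anticipate is precisely the passage from \emph{local} quasisymmetric data to a \emph{global} conclusion with controlled constants: verifying that the glued metric $\varrho$ is genuinely Ahlfors $2$-regular (the constant must not deteriorate near chart overlaps — this is where the uniform quasiconformality of the $h_{ij}$ is essential), and that $(Z,d)$ and $(Z,\varrho)$ are globally, not merely chartwise, quasisymmetric. The crux is the chaining/Lebesgue-number argument showing that a homeomorphism which is $\eta$-quasisymmetric on each member of a fixed finite open cover of a compact connected space is quasisymmetric outright, with the accumulated distortion kept under control. Everything else — compactness, the partition of unity, the patching of linear local contractibility, and the final invocation of Theorem~\ref{Bonk_Kleiner} — is routine; it is the reduction to the plane (via V\"ais\"al\"a's $\mathrm{QS}\Rightarrow\mathrm{QC}$ theorem and the measurable mapping theorem) that makes dimension two special.
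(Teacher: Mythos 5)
The paper does not actually prove this theorem: it is imported verbatim from \cite{Lindquist17}, with only the remark that an alternative argument can be extracted from \cite{GW18}. So there is no internal proof to compare your sketch against, and I can only assess it on its own terms. Your direction (i)$\Rightarrow$(ii) is correct and complete: restricting an $\eta$-quasisymmetry to a subset preserves $\eta$, and a proper spherical cap is bi-Lipschitz to $\D$. For (ii)$\Rightarrow$(i), your parenthetical route (finite subcover, shrink charts, transition maps QS hence uniformly quasiconformal by V\"ais\"al\"a, compatible conformal structure via the measurable Riemann mapping theorem, uniformize to $\widehat{\C}$, observe the uniformizing map is quasisymmetric on each shrunken chart, then upgrade to a global quasisymmetry) is the standard dimension-two strategy and is sound in outline. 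Note that the ``crux'' you worry about is, in the purely qualitative form needed here, not hard: for a homeomorphism $f$ of compact metric spaces that is $\eta$-quasisymmetric on every ball of radius a Lebesgue number $\lambda$ of the cover, triples with both distances below $\lambda$ are handled by $\eta$, and the remaining triples are handled by the moduli of continuity of $f$ and $f^{-1}$ together with the positive lower bound $\inf\{d_Y(f(u),f(v)) : d(u,v)\ge \lambda/2\}>0$. You should write this out rather than defer it, since it is the only place the local data becomes global; a uniform (quantitative) version is harder but is not required by the statement.

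The genuine weak point is your primary route through a glued Riemannian tensor and Theorem~\ref{Bonk_Kleiner}. The charts $\psi_j$ are merely quasisymmetric homeomorphisms, so $\psi_j^{\ast}g_{\mathrm{euc}}$ is not defined as a measurable metric tensor on $Z$ until you have already fixed a conformal (or at least differentiable) structure; and once you have the conformal structure you may as well uniformize directly, making the detour through Ahlfors $2$-regularity, strong $A_\infty$ weights, and Bonk--Kleiner unnecessary. Moreover the $2$-regularity of the glued length metric is asserted, not proved, and is exactly the kind of claim that can fail when densities are patched by a partition of unity without control on chart overlaps. I would drop that route entirely and promote the parenthetical one to the actual proof, supplying (a) the classical fact that a surface with a uniformly quasiconformal atlas carries a compatible Riemann surface structure in which the charts remain uniformly quasiconformal, (b) the observation that a $K$-quasiconformal map of planar domains is quasisymmetric only on relatively compact subsets (your shrinking of the $U_j$ is what makes this usable), and (c) the local-to-global lemma above.
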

\noindent An alternative proof of Theorem \ref{Intro_Thm1} can be given using ideas in \cite{GW18}. Roughly speaking, Theorem \ref{Intro_Thm1} says local geometry properties promote to global property. Since weak tangents are local, one could ask the following question:
\begin{question}
Suppose $(Z,d)$ is doubling and linearly locally connected. Are the following two statements equivalent?
\begin{enumerate}[(i)]
\item $Z$ is quasisymmetrically equivalent to the standard $d$-sphere $\mathbb{S}^d$
\item Every weak tangent of $Z$ is quasisymmetrically equivalent to $\R^d$. 
\end{enumerate}
\end{question}
\noindent When $Z$ is a doubling and linearly locally connected metric sphere, statement (i) implies statement (ii). However, our construction shows that statement (ii) does not imply statement (i).

\begin{ack} The author would like to thank Mario Bonk and John Garnett for their support and many conversations. The author thanks Jeff Lindquist for helpful discussions and Wenbo Li for his comments and corrections.
\end{ack}

\section{The Modification of the Metric on $\R$}

We first construct a metric $\delta$ on $\R$ so that every weak tangent of $(\R, \delta)$ is isometric to $(\R, 0, d)$, where $d$ is the Euclidean metric on $\R$, and so that there are segments $[a_j, b_j]$ of $(\R, \delta)$ so that $\ell([a_j, b_j]) / \delta(a_j, b_j) \to +\infty$. Here, and in the remaining of the paper, $\ell(\gamma)$ denotes the length of the curve $\gamma$ with respect to the metric $\delta$. Once the metric $\delta$ is constructed, we show that for all $n \geq 2$, the weak tangents of the product space $(\R, \delta) \times \R^{n-1}$ are isometric to the Euclidean space $\R^n$, but because $\ell([a_j, b_j]) / \delta(a_j, b_j) \to +\infty$, the product space $(\R, \delta) \times \R^{n-1}$ cannot be quasisymmetric to the Euclidean space $\R^n$. 

To construct $(\R, \delta)$, we glue together segments $[a_j, b_j]$ that look like snowflake curves of Hausdorff dimension $(\alpha_j)^{-1}$ i.e. intervals $I$ equipped with the metric $d^{\alpha_j}$. Such snowflake curves will have infinite length, so we modify the metric on the segment $[a_j, b_j]$ to get a metric $d_n$ so that $d_n$ looks like the Euclidean metric when two points are close together, but it looks like $d^{\alpha_j}$ when the two points are far apart.

Note that our construction of $(\R, \delta)$ only modifies the geometry of a bounded subset of $\R$, and therefore we can embed our $n$-dimensional construction into a topological $n$-sphere. 

To begin the construction let $\alpha \in (0,1)$ and $c \in (0,1)$ and define
\[ \varphi_{\alpha, c}(x) = \begin{cases}
L(\alpha, c) x,& x \in [0,c] \\
\br{ \frac{x - c(1-\alpha)}{1 - c(1-\alpha)}}^\alpha,& x \in [c,1],
\end{cases} \]
where 
\[ L(\alpha, c) = \frac{1}{c} \br{\frac{c\alpha}{1 - c(1-\alpha)}}^\alpha. \]
The function $\varphi_{\alpha, c}$ is the only function that has the following properties:
\begin{enumerate}
\item $\varphi_{\alpha, c}(0) = 0$,
\item $\varphi_{\alpha, c}(1) = 1$,
\item $\varphi_{\alpha, c}$ is linear on $[0,c]$,
\item There exists $a,b \in \R$, with $a \neq 0$, such that $\varphi_{\alpha ,c}(x) = (ax - b)^\alpha$ when $x \in [c,1]$,
\item $\varphi_{\alpha, c}$ is continous and differentiable at $c$.
\end{enumerate}
In addition to the above properties that uniquely define $\varphi_{\alpha, c}$, we observe that $\varphi_{\alpha, c}$ has the following extra properties:
\begin{enumerate}
\item $\varphi_{\alpha, c} :[0,1] \to [0,1]$ is a homeomorphism,
\item $\varphi_{\alpha, c}$ is concave on $[0,1]$.
\end{enumerate}

\begin{lemma}\label{lemma_sup_multiplicative}
For any $\alpha \in (0,1)$, $c \in (0,1]$, and $a, b \in [0,1]$, we have 
\[ \varphi_{\alpha, c}(ab) \varphi_{\alpha, c}(a) \varphi_{\alpha, c}(b). \]
\end{lemma}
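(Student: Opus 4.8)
The plan is to prove the inequality in its submultiplicative form
\[ \varphi_{\alpha, c}(ab) \le \varphi_{\alpha, c}(a)\,\varphi_{\alpha, c}(b), \]
which is the direction already dictated by the linear piece: for $a,b$ small one has $\varphi_{\alpha,c}(ab)=L(\alpha,c)\,ab$ while $\varphi_{\alpha,c}(a)\varphi_{\alpha,c}(b)=L(\alpha,c)^{2}\,ab$, and $L(\alpha,c)\ge 1$. I would first dispose of the degenerate cases $a=0$ or $b=0$, where both sides vanish because $\varphi_{\alpha,c}(0)=0$, and then pass to the multiplicative group by writing $a=e^{u}$ and $b=e^{v}$ with $u,v\le 0$ and setting $g(u)=\log\varphi_{\alpha,c}(e^{u})$ on $(-\infty,0]$. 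Under this change of variables the claimed inequality is exactly the subadditivity $g(u+v)\le g(u)+g(v)$, and one records at once that $g(0)=\log\varphi_{\alpha,c}(1)=0$.

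The heart of the argument is to show that $g$ is concave, for which I would compute the elasticity $E(x)=x\varphi_{\alpha,c}'(x)/\varphi_{\alpha,c}(x)=g'(\log x)$ and check that it is non-increasing on $(0,1]$. On $[0,c]$ the function $\varphi_{\alpha,c}$ is linear, so $E\equiv 1$ there. On $[c,1]$, writing $\varphi_{\alpha,c}(x)=(px-q)^{\alpha}$ with $p=\frac{1}{1-c(1-\alpha)}$ and $q=\frac{c(1-\alpha)}{1-c(1-\alpha)}$ (so that the convenient identity $p-q=1$ holds, matching $\varphi_{\alpha,c}(1)=1$ and keeping $px-q>0$ throughout), a direct computation gives $E(x)=\frac{\alpha p x}{px-q}$, whose derivative is proportional to $-q<0$; hence $E$ decreases from $E(c)=1$ to $E(1)=\alpha p<1$. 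Because property~(5) makes $\varphi_{\alpha,c}$ of class $C^{1}$ at $c$, the two expressions for $E$ agree at the junction, so $E$ is continuous and non-increasing on all of $(0,1]$. Consequently $g'(u)=E(e^{u})$ is non-increasing in $u$, and $g$ is concave.

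Finally I would convert concavity of $g$ together with $g(0)=0$ into subadditivity on the negative axis: for $u,v\le 0$ one writes $u=\frac{u}{u+v}(u+v)+\frac{v}{u+v}\cdot 0$ and symmetrically for $v$, and concavity yields $g(u)\ge\frac{u}{u+v}g(u+v)$ and $g(v)\ge\frac{v}{u+v}g(u+v)$; adding these gives $g(u)+g(v)\ge g(u+v)$, which is the desired subadditivity. Exponentiating recovers $\varphi_{\alpha,c}(ab)\le\varphi_{\alpha,c}(a)\varphi_{\alpha,c}(b)$. I expect the only delicate point to be the behaviour at the breakpoint $x=c$: one must invoke the $C^{1}$ matching to guarantee that $E$ has no upward jump there, since an increase of the elasticity across $c$ would destroy the concavity of $g$ and with it the whole argument. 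The computation on each of the two pieces is otherwise routine.
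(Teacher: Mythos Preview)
Your argument is correct. The reduction to subadditivity of $g(u)=\log\varphi_{\alpha,c}(e^{u})$ via concavity is sound: on the linear piece the elasticity $E(x)=x\varphi_{\alpha,c}'(x)/\varphi_{\alpha,c}(x)$ is identically $1$; on the power piece $E(x)=\alpha px/(px-q)$ with $q>0$, so $E'<0$; and the $C^{1}$ matching at $x=c$ gives $E(c^{+})=1$, so $E$ is continuous and non-increasing on $(0,1]$. Hence $g'$ is non-increasing, $g$ is concave with $g(0)=0$, and the standard convex-combination trick yields $g(u+v)\le g(u)+g(v)$ for $u,v\le 0$.

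This is, however, a genuinely different route from the paper's. The paper exploits the \emph{family} structure: it shows that for fixed $a\in(0,1]$ the rescaled function $\varphi_\ast(x)=\varphi_{\alpha,c}(ax)/\varphi_{\alpha,c}(a)$ again satisfies the five characterizing properties, hence $\varphi_\ast=\varphi_{\alpha,c_\ast}$ for some $c_\ast\in[c,1]$; then the monotonicity $c\mapsto\varphi_{\alpha,c}$ (decreasing, because $L(\alpha,c)$ is decreasing in $c$) gives $\varphi_\ast\le\varphi_{\alpha,c}$, and evaluating at $b$ yields the inequality. So the paper's proof is structural and essentially computation-free once one accepts that the family is ordered in $c$, whereas your proof is self-contained and purely analytic, never touching the parameter $c$ after fixing it. Your approach has the advantage that it does not rely on the somewhat implicit step of checking that the family is pointwise monotone in $c$; the paper's approach has the advantage of explaining \emph{why} the inequality holds (the rescaling pushes one to a larger $c$).
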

\begin{proof}
The function $L(\alpha, c)$ is decreasing in $c$ to $1$, therefore if $0 < c_1 \leq c_2 \leq 1$, then
\[ \varphi_{\alpha, c_1} \geq \varphi_{\alpha, c_2}. \]
Let 
\[ \varphi_\ast(x) = \frac{\varphi_{\alpha, c}(ax)}{\varphi_{\alpha, c}(a)}.  \]
The function $\varphi_\ast$ meets condition (1)-(5) listed above for some $c_\ast \in [c, 1]$, therefore $\varphi_{\ast} = \varphi_{\alpha, c_\ast}$. This implies that $\varphi_\ast \leq \varphi_{\alpha, c}(x)$ for all $x \in [0,1]$. Taking $x = b$, we have 
\[ \varphi_{\alpha, c}(ab) \leq \varphi_{\alpha, c}(a)\varphi_{\alpha, c}(b). \]
\end{proof}

\begin{lemma}\label{lemma_rough_isometries_1}
For any fixed $\alpha \in (0,1)$, for any $c \in (0,1)$, and whenever $0 \leq t \leq x \leq 1$, we have
\[ 0 \leq \varphi_{\alpha, c}(t) + \varphi_{\alpha, c}(x-t) -\varphi_{\alpha, c}(x) \leq \br{ 2 - 2^{\alpha}}\varphi_{\alpha, c}(x). \]
\end{lemma}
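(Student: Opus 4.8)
The plan is to reduce the two-sided estimate to a one-variable inequality by exploiting the concavity of $\varphi := \varphi_{\alpha,c}$. Fix $x \in (0,1]$ (the case $x=0$ being trivial) and set $g(t) = \varphi(t) + \varphi(x-t) - \varphi(x)$ for $t \in [0,x]$. Since $\varphi$ is concave, so is $t \mapsto \varphi(x-t)$, hence $g$ is concave on $[0,x]$; moreover $g(0) = g(x) = 0$ and $g(t) = g(x-t)$. Concavity together with $g(0)=g(x)=0$ gives $g(t) \ge \tfrac{x-t}{x}g(0) + \tfrac{t}{x}g(x) = 0$, which is the lower bound. For the upper bound, a symmetric concave function on $[0,x]$ attains its maximum at the midpoint $t=x/2$: if $t_0$ maximizes $g$ then so does $x-t_0$, and by concavity $g(x/2) \ge \tfrac12 g(t_0) + \tfrac12 g(x-t_0) = \max g$. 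Hence it suffices to prove
\[ 2\varphi(x/2) - \varphi(x) \le (2 - 2^\alpha)\varphi(x), \qquad\text{i.e.}\qquad \varphi(x/2) \le \frac{3 - 2^\alpha}{2}\,\varphi(x). \]

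By Lemma \ref{lemma_sup_multiplicative} applied with $a = 1/2$ and $b = x$ one gets $\varphi(x/2) \le \varphi(1/2)\varphi(x)$, so it is enough to show $\varphi(1/2) \le \frac{3-2^\alpha}{2}$. In fact I would establish the stronger pointwise bound $\varphi_{\alpha,c}(y) \le y^\alpha$ for every $y \in [0,1]$. On $[c,1]$ this amounts to $\tfrac{y - c(1-\alpha)}{1 - c(1-\alpha)} \le y$, which rearranges to $c(1-\alpha)(1-y) \ge 0$; on $[0,c]$ it amounts to $L(\alpha,c) \le y^{\alpha-1}$, and since $y^{\alpha-1} \ge c^{\alpha-1}$ there, substituting $L(\alpha,c) = c^{\alpha-1}\bigl(\tfrac{\alpha}{1-c(1-\alpha)}\bigr)^\alpha$ reduces it to $\alpha \le 1 - c(1-\alpha)$, i.e. again to $c \le 1$. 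Taking $y = 1/2$ gives $\varphi(1/2) \le 2^{-\alpha}$.

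It then remains to check the elementary inequality $2\cdot 2^{-\alpha} - 1 \le 2 - 2^\alpha$: writing $v = 2^\alpha \in [1,2]$, the claim $2^{1-\alpha} + 2^\alpha \le 3$ becomes $\tfrac{2}{v} + v \le 3$, i.e. $v^2 - 3v + 2 = (v-1)(v-2) \le 0$, which holds for all $v \in [1,2]$. Combining everything, $g(t) \le g(x/2) = 2\varphi(x/2) - \varphi(x) \le (2\varphi(1/2)-1)\varphi(x) \le (2^{1-\alpha}-1)\varphi(x) \le (2 - 2^\alpha)\varphi(x)$, which is the stated bound. The only steps requiring any thought are the reduction to the midpoint $t=x/2$ and the pointwise estimate $\varphi_{\alpha,c}\le(\cdot)^\alpha$; the rest is bookkeeping, so I do not expect a real obstacle — the one thing to be careful with is treating both pieces of the definition of $\varphi_{\alpha,c}$ (linear piece and power piece) and substituting the formula for $L(\alpha,c)$ correctly.
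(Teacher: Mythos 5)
Your proof is correct, but it takes a genuinely different route from the paper's. The paper argues by cases on the position of $x$ relative to $c$: for $x\le c$ the expression vanishes on the linear piece; for $x\ge 2c$ it locates the maximum at $t=x/2$ via the critical-point equation $\varphi_{\alpha,c}'(t)=\varphi_{\alpha,c}'(x-t)$ and uses the power-law formula to get $\varphi_{\alpha,c}(x)\ge 2^{\alpha}\varphi_{\alpha,c}(x/2)$; and for $c<x<2c$ it runs a separate estimate mixing the linear bound $\varphi_{\alpha,c}(t)+\varphi_{\alpha,c}(x-t)\le L(\alpha,c)x$ with concavity, ending with the bound $(2-2^{\alpha})\varphi_{\alpha,c}(x/2)\le(2-2^{\alpha})\varphi_{\alpha,c}(x)$. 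You instead work case-free: concavity of $g(t)=\varphi_{\alpha,c}(t)+\varphi_{\alpha,c}(x-t)-\varphi_{\alpha,c}(x)$ together with $g(0)=g(x)=0$ gives the lower bound and, via the symmetry $g(t)=g(x-t)$, the midpoint reduction $g(t)\le g(x/2)$ (indeed $g(x/2)\ge\tfrac12 g(t)+\tfrac12 g(x-t)=g(t)$ directly, so you do not even need a maximizer); then Lemma \ref{lemma_sup_multiplicative} with $a=1/2$, $b=x$, plus your pointwise estimate $\varphi_{\alpha,c}(y)\le y^{\alpha}$ (both pieces of which check out, including the substitution of $L(\alpha,c)$), give $\varphi_{\alpha,c}(x/2)\le 2^{-\alpha}\varphi_{\alpha,c}(x)$ uniformly in $x$, and the elementary inequality $2^{1-\alpha}+2^{\alpha}\le 3$ closes the argument. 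What your approach buys: it avoids the three-case analysis and the slightly delicate critical-point step on a piecewise-defined function, it reuses the submultiplicativity lemma already proved, and it actually yields the sharper constant $2^{1-\alpha}-1\le 2-2^{\alpha}$; what the paper's approach buys is that it stays entirely inside the explicit formulas for $\varphi_{\alpha,c}$ and produces the intermediate bound in terms of $\varphi_{\alpha,c}(x/2)$, without invoking Lemma \ref{lemma_sup_multiplicative}.
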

\begin{proof}
When $x\leq c$, for all $t \in [0,x]$, we have $\varphi_{\alpha, c}(t) + \varphi_{\alpha, c}(x-t) -\varphi_{\alpha, c}(x) = 0$. When $x \geq 2c$ is fixed, $\varphi_{\alpha, c}(t) + \varphi_{\alpha, c}(x-t) -\varphi_{\alpha, c}(x)$ is maximized when
\[ \varphi_{\alpha, c}'(t) - \varphi_{\alpha, c}'(x-t) = 0, \]
which is possible only if $x- t = t$ i.e. $2t = x$. Then we have
\begin{align*}
\varphi_{\alpha, c}(t) + \varphi_{\alpha, c}(x-t) -\varphi_{\alpha, c}(x)
&\leq 2\varphi_{\alpha, c}(x/2) - \varphi_{\alpha, c}(x)  \\
&\leq 2\varphi_{\alpha, c}(x/2) - 2^{\alpha}\varphi_{\alpha, c}(x/2) \\
&= (2 - 2^{\alpha})\varphi_{\alpha, c}(x/2).
\end{align*}
When $c < x < 2c$, we have 
\[ \varphi_{\alpha, c}(t) + \varphi_{\alpha, c}(x-t) \leq L(\alpha, c)x = \frac{x}{c} \varphi_{\alpha, c}(c). \]
By the concavity of $\varphi_{\alpha, c}$, we have 
\[ \frac{\varphi_{\alpha, c}(x)}{x} \geq \frac{\varphi_{\alpha, c}(2c)}{2c}. \]
Therefore
\begin{align*}
\varphi_{\alpha, c}(t) + \varphi_{\alpha, c}(x-t) -  \varphi_{\alpha, c}(x)
&\leq \frac{x}{c} \varphi_{\alpha, c}(c)-  \frac{x \varphi_{\alpha, c}(2c)}{2c} 
= \frac{x}{2c} \br{ 2 \varphi_{\alpha, c}(c) -\varphi_{\alpha, c}(2c)}.
\end{align*}
But
\[ 2 \varphi_{\alpha, c}(c) -\varphi_{\alpha, c}(2c) \leq  (2 - 2^{\alpha})\varphi_{\alpha, c}(c) = (2 - 2^{\alpha})cL(\alpha, c). \]
Therefore
\begin{align*}
\varphi_{\alpha, c}(t) + \varphi_{\alpha, c}(x-t) -  \varphi_{\alpha, c}(x)
\leq \frac{x}{2c} (2 - 2^{\alpha})cL(\alpha, c) 
= (2 - 2^{\alpha}) \varphi_{\alpha, c}(x/2).
\end{align*}
In any case, we have 
\[ 0 \leq \varphi_{\alpha, c}(t) + \varphi_{\alpha, c}(x-t) -  \varphi_{\alpha, c}(x) \leq (2 - 2^{\alpha}) \varphi_{\alpha, c}(x/2) \leq (2 - 2^{\alpha}) \varphi_{\alpha, c}(x). \]
\end{proof}

\section{The One Dimensional Construction}

For any $\alpha \in (0,1)$, we have
\begin{equation}\label{equation_1}
\lim_{c \to 0^+} L(\alpha, c) = +\infty.
\end{equation}
Let $\alpha_n$ be an increasing sequence in $(0,1)$ such that $\lim_{n \to \infty} \alpha_n = 1$. By (\ref{equation_1}), we can choose $c_n \in (0,1)$ such that $c_n \to 0$, and
\[ \lim_{n \to \infty} L(\alpha_n, c_n) = +\infty. \]
Let $\varphi_n = \varphi_{\alpha_n, c_n}$. Choose a sequence $s_n$ so that for all $n \in \N$, $s_n < 2\br{\frac{1}{n} - \frac{1}{n+1}}$, such that $s_n L(\alpha_n, c_n)$ is decreasing and $\sum_{n \in\N} s_n L(\alpha_n, c_n) < \infty$. 

For all $n \in \N$, let $a_n = \frac{1}{n} -s_n, b_n = \frac{1}{n}$. 
Let $I_n = [a_n, b_n]$, and equip $I_n$ with the metric $\delta_n = s_n\varphi_n \circ (s_n^{-1} d)$, where $d$ is the usual Euclidean metric on $I_n$. Note that 
\begin{enumerate}
\item The distance between the two endpoints of $I_n$ is $\delta_n(a_n, b_n) = s_n$. 
\item $I_n$ is rectifiable and the length of $I_n$ is $\ell(I_n) = s_nL(\alpha_n, c_n)$.
\end{enumerate}
We construct a metric $\delta$ on $\R$ so that if $x \leq y$, 
\begin{enumerate}
\item $\delta = \delta_n$ when restricted to $I_n \times I_n$,
\item $\delta(x,y) = d(x,y)$ when $x,y \in \R \bs \bigcup_{i \geq } I_i$,
\item $\delta(x,y) = d(x, a_n) + \delta_n(a_n, y)$ when $x \in \R \bs \bigcup_{i \in \N} I_i$ and $y \in I_n$, 
\item $\delta(x,y) = \delta_n(x, b_n) + d(b_n, y)$ when $x \in I_n$ and $y \in  \R \bs \bigcup_{i \in \N} I_i$, and
\item $\delta(x,y) = \delta_n(x, b_n) + d(b_n, a_m) + d_m(a_m, y)$ when $x \in I_n$, $y \in I_m$.
\end{enumerate}

\section{The Weak Tangents of $(\R, \delta)$}

A \textit{pointed metric space} is a a triplet $(X, x_0, d_X)$, where $(X, d_X)$ is a metric space and $x_0$ is a point in $X$. Let $\vep > 0$. A map $\psi: (X, x_0, d_X) \to (Y, y_0, d_Y)$ between two pointed metric spaces $(X, x_0, d_X)$ and $(Y, y_0, d_Y)$ is a \textit{$\vep$-rough isometry} if
\begin{enumerate}
\item $\psi(x_0) = y_0$,
\item $d_Y(\psi(X), Y) \leq \vep$, and
\item for all $x_1, x_2 \in X$, 
\[\abs{ d_Y(\psi(x_1), \psi(x_2)) - d_X(x_1, x_2)} \leq \vep. \]
\end{enumerate}
Note that a $\vep$-rough isometry may not be continuous. The \textit{pointed Gromov-Hausdorff distance} between 2 pointed metric spaces, denoted $d_{GH}\br{(X, x_0, d_X), (Y, y_0, d_Y)}$, is defined as the infimum of $\vep$ for which we can find a $\vep$-rough isometry $\psi_\delta: (X, x_0, d_X) \to (Y, y_0, d_Y)$.

A pointed metric space $(T, p, d_T)$ is called a \textit{weak tangent} of another metric space $(X, d_X)$ if there exist points $x_n \in X$ and positive integers $\lambda_n \to +\infty$ such that $(X, x_n, \lambda_n d_X)$ converges to $(T, p, d_T)$ in pointed Gromov-Hausdorff sense, i.e. for all $R > 0$, and for all $\vep > 0$ there exists $N > 0$ such that for all $n \geq N$, 
\begin{align*}
d_{GH}\br{\wbar{B}_{\lambda_n d_X}(x_n, R + \vep), \wbar{B}_{d_T}(p, R)} \leq \vep. 
\end{align*}
In particular, we get $(X, x_n, \lambda_n d_X) \to (T, p, d_T)$ if for all $R > 0$, and for all $\vep > 0$ there exists $N > 0$ such that for all $n \geq N$, 
\begin{align*}
d_{GH}\br{\wbar{B}_{\lambda_n d_X}(x_n, R), \wbar{B}_{d_T}(p, R)} \leq \vep. 
\end{align*}
Our notion of pointed Gromov-Hausdorff convergence is adopted from \cite[Definition 11.3.1]{The_Purple_Book}. Also see \cite{Burago} for detailed discussion on Gromov-Hausdorff distance and Gromov-Hausdorff convergence. 

Let M be a set of separable, uniformly doubling, and uniformly linearly locally contractible pointed metric spaces. The pointed Gromov Hausdorff convergence on the set $M$ induces a topology on $M$ that is metrizable\cite{LeDonne}. In particular, if a sequence of pointed metric space converges to a Gromov-Hausdorff limit, then the limit is unique. 

The goal of this section is to prove the following proposition that describe all the weak tangents of $(\R, \delta)$.
\begin{prop}\label{Prop_1}
For all $a_n \in \R$, and for all positive integers $\lambda_n \to +\infty$, $(\R, a_n, \lambda_n \delta)$ converges in pointed Gromov-Hausdorff sense to $(\R, 0, d)$. 
\end{prop}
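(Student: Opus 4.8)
The plan is to construct, for an arbitrary sequence of basepoints $x_n\in\R$ and an arbitrary sequence of positive integers $\lambda_n\to+\infty$, an explicit $\vep$-rough isometry of the rescaled $\delta$-ball onto a Euclidean ball, and to verify the only nontrivial axiom of a rough isometry by reducing it to a quantitative ``defect from additivity'' estimate for $\delta$. For $p\in\R$ put $\sigma_p(x)=\delta(p,x)$ if $x\ge p$ and $\sigma_p(x)=-\delta(p,x)$ if $x<p$. From the structural description of $\delta$ recorded below, $\delta(p,\cdot)$ is continuous and strictly monotone on each side of $p$, so $\sigma_p$ is a homeomorphism of $\R$ with $\sigma_p(p)=0$; consequently $\overline{B}_\delta(p,r)$ is the interval $[x^-,x^+]$ with $\delta(p,x^\pm)=r$, and $\lambda_n\sigma_{x_n}$ maps $\overline{B}_{\lambda_n\delta}(x_n,R)$ homeomorphically onto $[-R,R]=\overline{B}_d(0,R)$ while fixing the basepoint. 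Taking $\psi_n:=\lambda_n\sigma_{x_n}$ regarded on $\overline{B}_{\lambda_n\delta}(x_n,R)$, axioms (1) and (2) of an $\vep$-rough isometry are automatic, and by the sufficient condition for pointed Gromov--Hausdorff convergence noted right after the definition of weak tangent, it remains only to establish axiom (3): for all $x,y$ in that ball, $\abs{\abs{\psi_n(x)-\psi_n(y)}-\lambda_n\delta(x,y)}\le\vep$ once $n$ is large.

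A short computation from the triangle inequality and the monotonicity just used shows that, for $x,y$ in the ball, $\abs{\abs{\psi_n(x)-\psi_n(y)}-\lambda_n\delta(x,y)}=\lambda_n\,\Delta(a,b,c)$, where $a\le b\le c$ is the increasing rearrangement of $\{x,y,x_n\}$ and $\Delta(a,b,c):=\delta(a,b)+\delta(b,c)-\delta(a,c)\ge 0$ is the defect. Since $x$, $y$ and $x_n$ all lie in $\overline{B}_\delta(x_n,R/\lambda_n)$, every pairwise $\delta$-distance among them is at most $2R/\lambda_n$, so axiom (3) for all large $n$ --- and hence Proposition \ref{Prop_1} --- follows from
\[
\sup\bc{\lambda_n\,\Delta(a,b,c)\ :\ a\le b\le c,\ a,b,c\in\overline{B}_\delta(x_n,R/\lambda_n)}\longrightarrow 0\qquad(n\to\infty).
\]

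To prove this, first record the structural decomposition of $\delta$, read off case by case from its defining formula: if $p\le q$ do not lie in one common interval $I_m$, then $\delta(p,q)=d(p,q)+E(p)+F(q)$, where $E(p)=\delta_m(p,b_m)-(b_m-p)\ge 0$ when $p\in I_m$ (and $E(p)=0$ otherwise) and, symmetrically, $F(q)=\delta_m(a_m,q)-(q-a_m)\ge 0$ when $q\in I_m$; whereas $\delta(p,q)=\delta_m(p,q)$ when $p,q\in I_m$. Plugging this into $\Delta(a,b,c)$, the excesses at the outer points $a$ and $c$ cancel and all of the non-additivity concentrates at the middle point: $\Delta(a,b,c)=0$ if $b\notin\bigcup_m I_m$, while if $b\in I_m$ then $\Delta(a,b,c)=\Delta_m(u',b,w')$ with $u':=\max(a,a_m)$, $w':=\min(c,b_m)$ and $\Delta_m(u,v,w):=\delta_m(u,v)+\delta_m(v,w)-\delta_m(u,w)$; moreover $u',w'\in[a,c]\subseteq\overline{B}_\delta(x_n,R/\lambda_n)$, so $w'-u'\le\delta_m(u',w')\le 2R/\lambda_n$. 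Finally, $\Delta_m(u',b,w')$ is controlled in two complementary ways. Setting $X=(w'-u')/s_m$ and $\tau=(b-u')/s_m$, so that $\Delta_m(u',b,w')=s_m\br{\varphi_m(\tau)+\varphi_m(X-\tau)-\varphi_m(X)}$, Lemma \ref{lemma_rough_isometries_1} gives $\Delta_m(u',b,w')\le(2-2^{\alpha_m})\,\delta_m(u',w')\le(2-2^{\alpha_m})\cdot 2R/\lambda_n$, which is small for \emph{large} $m$ because $\alpha_m\to 1$; on the other hand, since each $\varphi_m$ is $C^1$ on $[0,1]$ (this is the content of properties (3)--(5) defining $\varphi_{\alpha,c}$), with $\omega_m$ the modulus of continuity of $\varphi_m'$, the function $g(\tau)=\varphi_m(\tau)+\varphi_m(X-\tau)-\varphi_m(X)$ has $g(0)=g(X)=0$ and $\abs{g'(\tau)}=\abs{\varphi_m'(\tau)-\varphi_m'(X-\tau)}\le\omega_m(X)$, hence $\Delta_m(u',b,w')\le(w'-u')\,\omega_m\br{(w'-u')/s_m}\le(2R/\lambda_n)\,\omega_m\br{2R/(s_m\lambda_n)}$, which for each \emph{fixed} $m$ tends to $0$ as $n\to\infty$ since $s_m>0$ is fixed while $\lambda_n\to\infty$. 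Combining: given $\eta>0$, choose $k$ with $2R(2-2^{\alpha_m})<\eta$ for all $m\ge k$, then $N$ with $2R\,\omega_m\br{2R/(s_m\lambda_n)}<\eta$ for the finitely many $m<k$ and all $n\ge N$; for $n\ge N$ and any admissible triple, $\lambda_n\Delta(a,b,c)=0$ if $b$ lies in no $I_m$, and otherwise is at most the first bound when the containing index is $\ge k$ and the second when it is $<k$. This yields the displayed estimate and finishes the proof.

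The crux is this last estimate, and in particular the observation that Lemma \ref{lemma_rough_isometries_1} by itself does not suffice: its constant $2-2^{\alpha_m}$ is a fixed positive number for each of the finitely many ``macroscopic'' snowflake pieces $I_m$ near the origin, so on those pieces the defect is in general only $O(\delta(a,c))$, not $o(\delta(a,c))$, and a naive argument stalls. The remedy is to peel those finitely many pieces off and use on them the $C^1$-regularity of the generating function $\varphi_m$ --- a property not uniform in $m$ --- together with the fact that $\lambda_n\to\infty$ drives the scale $R/\lambda_n$ to $0$; the two bounds above then cover complementary ranges of the index $m$. A secondary, routine but somewhat fiddly, point is the case analysis behind the decomposition $\delta=d+E+F$ and the middle-point concentration identity for $\Delta$.
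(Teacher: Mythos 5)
Your proof is correct, and its skeleton coincides with the paper's: the same signed-distance map $\psi$ as in Lemma \ref{lemma_rough_isometries_3'}, the same reduction of the rough-isometry axiom to the triangle-inequality defect $\delta(a,b)+\delta(b,c)-\delta(a,c)$, the same observation that this defect concentrates at the middle point (the content of Lemma \ref{lemma_rough_isometries_2}, which you rederive via the decomposition $\delta=d+E+F$), and the same use of Lemma \ref{lemma_rough_isometries_1} to handle large indices. Where you genuinely diverge is in the complementary regime, when the middle point of a triple lies in one of the finitely many ``macroscopic'' intervals $I_m$ for which $2-2^{\alpha_m}$ is not small. The paper (Lemma \ref{lemma_rough_isometries_3}, Case 2) disposes of this via the threshold $r<\tfrac{s_N}{2}\varphi_N(c_N)$: below that scale the ball is asserted to lie where $\delta$ is a length metric, so the defect vanishes exactly. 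You instead split triple by triple according to the index $m$ of the interval containing the middle point, and for the finitely many small $m$ you bound the defect by $(w'-u')\,\omega_m\bigl((w'-u')/s_m\bigr)$ using the continuity of $\varphi_m'$ on $[0,1]$ (which properties (3)--(5) of $\varphi_{\alpha,c}$ do provide); this is $o(1)$ for each fixed $m$ as the scale $R/\lambda_n\to 0$. Your version trades the paper's exact vanishing for a quantitative bound, but it is arguably more robust: it does not require the rescaled ball to sit inside the linear part of a single $I_N$, and it is indifferent to balls that meet several intervals $I_n$, $n>N$, a configuration on which the paper's Case 2 is terse. Both arguments give the required uniformity in the basepoints, since neither bound depends on $x_n$.
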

Note that Proposition \ref{Prop_1} guarentees the existence of weak tangents of $(\R, \delta)$. To prove the above proposition we will use the following three lemmas:

\begin{lemma}\label{lemma_rough_isometries_2}
Suppose $x, y, z \in \R$ are three points so that $x \leq y \leq z$. Suppose $N = \inf\{n \in \N: \{x,y,z\} \cap I_N \neq \emptyset\} < \infty$. Then 
\[ 0 \leq \delta(x,y) + \delta(y,z) - \delta(z,x) \leq \br{2 - 2^{\alpha_N}} \min\{ s_N, \delta(x,y) \} \]
\end{lemma}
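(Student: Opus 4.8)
The quantity $\delta(x,y)+\delta(y,z)-\delta(z,x)$ measures the failure of the triangle inequality to be an equality along the ordered triple $x\le y\le z$; I want to show this "defect" is nonnegative (immediate from the triangle inequality) and controlled by $(2-2^{\alpha_N})\min\{s_N,\delta(x,y)\}$, where $I_N$ is the first snowflaked segment that meets $\{x,y,z\}$. The key structural observation is that outside $\bigcup_i I_i$ the metric $\delta$ agrees with the Euclidean metric, and the gluing rules (1)--(5) in Section 3 say that $\delta$ is obtained by concatenating Euclidean pieces with the single rescaled-snowflake piece $\delta_N$ on $I_N=[a_N,b_N]$; on any configuration touching only one $I_N$, the defect is entirely produced by how the three points sit relative to that one segment. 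So the plan is: reduce to the case where the defect is genuinely nonzero, observe this forces $y$ (the middle point) to lie in the "interior region" governed by $I_N$, and then transfer the estimate to Lemma \ref{lemma_rough_isometries_1} via the scaling $\delta_N = s_N\varphi_N\circ(s_N^{-1}d)$.

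First I would dispose of the trivial cases. If $\{x,y,z\}\cap I_N$ is such that the three points, together with the endpoints $a_N,b_N$, are arranged so that $\delta$ is additive along $x\to y\to z$ (for instance if $y\notin I_N$ and $y$ is not "between" $x$ and $z$ in a way that routes through $I_N$, or if all three lie on the same Euclidean side), then the defect is exactly $0$ and there is nothing to prove. The only way to get a strictly positive defect is the configuration where the geodesic-length structure forces $x\to y\to z$ to traverse part of $I_N$ non-optimally; using gluing rules (3)--(5), write $\delta(x,y)$, $\delta(y,z)$, $\delta(z,x)$ explicitly as sums of a Euclidean contribution (from the parts of the points lying outside $I_N$, which cancel in the defect since Euclidean distance is additive on a line) plus $\delta_N$-contributions from the parts inside $I_N$. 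After this cancellation the defect reduces to
\[
\delta_N(x',y') + \delta_N(y',z') - \delta_N(z',x')
\]
where $x'\le y'\le z'$ are the "clipped" points $\max\{a_N,\min\{b_N,\cdot\}\}$, i.e. the projections of $x,y,z$ onto the interval $I_N$. (One must check that the Euclidean pieces really do cancel; this is where rule (2) — $\delta = d$ off the union — and the fact that the $I_i$ are pairwise disjoint subintervals of a line are used.)

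Now apply the scaling. Writing $u = s_N^{-1}(x'-a_N)$, $v = s_N^{-1}(y'-a_N)$, $w = s_N^{-1}(z'-a_N)$, all in $[0,1]$ with $u\le v\le w$, we have $\delta_N(x',y') = s_N(\varphi_N(v)-\varphi_N(u))$ when... — actually more carefully, $\delta_N$ restricted to $I_N\times I_N$ is $s_N\varphi_N(s_N^{-1}|p-q|)$, so the defect becomes $s_N[\varphi_N(v-u)+\varphi_N(w-v)-\varphi_N(w-u)]$. Setting $t = v-u$ and $x_{\mathrm{arg}} = w-u$ with $0\le t\le x_{\mathrm{arg}}\le 1$, Lemma \ref{lemma_rough_isometries_1} gives
\[
0 \le \varphi_N(t)+\varphi_N(x_{\mathrm{arg}}-t)-\varphi_N(x_{\mathrm{arg}}) \le (2-2^{\alpha_N})\varphi_N(x_{\mathrm{arg}}),
\]
so the defect is at most $(2-2^{\alpha_N})\,s_N\varphi_N(w-u) = (2-2^{\alpha_N})\,\delta_N(x',z')$. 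Finally I bound $\delta_N(x',z')$ two ways: it is at most $\delta_N(a_N,b_N) = s_N$ since $\varphi_N\le 1$, and it is at most $\delta(x,y)$ — this last inequality needs a short argument. Here I would use that $\delta_N(x',z') \le \delta_N(x',y') + \delta_N(y',z')$ and that each clipped $\delta_N$-distance is dominated by the corresponding unclipped $\delta$-distance; combined with the quasi-additivity this should yield $\delta_N(x',z')\lesssim\delta(x,y)$, but getting the constant exactly right (no extra factor) is the delicate point. I expect the main obstacle to be precisely this last comparison $\delta_N(x',z')\le\delta(x,y)$ and the careful bookkeeping of which of the gluing cases (1)--(5) is in force — in particular ruling out, via the disjointness of the $I_i$ and the definition of $N$ as an infimum, any interaction with a second snowflaked segment $I_m$. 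Once the configuration is pinned down, everything else is a direct substitution into Lemma \ref{lemma_rough_isometries_1}.
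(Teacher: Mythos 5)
Your overall strategy (isolate the ``defect'' $\delta(x,y)+\delta(y,z)-\delta(x,z)$, cancel the Euclidean contributions, and feed what remains into Lemma \ref{lemma_rough_isometries_1} after rescaling by $s_n$) is the right one, and it is what the paper does. But your reduction step contains a genuine error: you clip $x,y,z$ onto the segment $I_N$, where $N$ is the \emph{smallest} index whose segment meets $\{x,y,z\}$. The defect is not produced by $I_N$ in general; it is produced by whichever segment contains the \emph{middle} point $y$ in its interior. These need not coincide: take $z\in I_N$ and $y$ in the interior of $I_m$ for some $m>N$ (recall $I_m$ lies to the left of $I_N$, so $x\le y\le z$ is consistent and $N$ is still the infimum). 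Writing out the gluing rules, the Euclidean pieces and the full crossing of $I_N$ cancel, and the defect equals $\delta_m(a_m\vee x,y)+\delta_m(y,b_m)-\delta_m(a_m\vee x,b_m)>0$, whereas your clipped triple on $I_N$ has $x'=y'=a_N$ and reports defect $0$. The paper avoids this by localizing around $y$ rather than around $I_N$: it sets $a=\sup\{a_n\le y\}\vee\sup\{b_n\le y\}\vee x$ and $b=\inf\{a_n\ge y\}\wedge\inf\{b_n\ge y\}\wedge z$, shows the defect equals $\delta(a,y)+\delta(y,b)-\delta(a,b)$ with $a,y,b$ in a single $I_n$ for some $n\ge N$ (or in a Euclidean gap), applies Lemma \ref{lemma_rough_isometries_1} there, and only at the end uses $\alpha_n\ge\alpha_N$ to replace $2-2^{\alpha_n}$ by $2-2^{\alpha_N}$. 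You flagged the possible ``interaction with a second segment $I_m$'' as something to be ruled out, but it cannot be ruled out; it must be absorbed by this monotonicity in $n$.

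The second gap is the one you yourself identified as delicate, and it is indeed where your route breaks: the inequality $\delta_N(x',z')\le\delta(x,y)$ is false (take $y=x=a_N$ and $z=b_N$, so $\delta(x,y)=0$ while $\delta_N(x',z')=s_N$), so you cannot obtain the $\delta(x,y)$ half of the minimum by bounding the defect by $(2-2^{\alpha_N})\delta_N(x',z')$. The way to get a bound by $\delta(x,y)$ is direct: since $\varphi_n$ is increasing, $\varphi_n(t)+\varphi_n(u-t)-\varphi_n(u)\le\varphi_n(t)$, so the defect is at most the $\delta$-distance between the first two points of the localized triple, with no detour through $\delta_N(x',z')$. (Be aware that even then one only gets $\min\{(2-2^{\alpha_n})s_n,\ \delta(x,y)\}$ cleanly; carrying the factor $2-2^{\alpha_N}$ onto the $\delta(x,y)$ term as the statement is written requires additional care near the endpoints of $I_n$, where the defect can be comparable to $\delta(x,y)$ itself, so do not expect your computation to reproduce the stated constant exactly.)
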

\begin{proof}
If $N = +\infty$, then $\delta(x,y) + \delta(y,z) - \delta(z,x) = 0$. Otherwise, let 
\begin{align*}
a &= \sup\{ a_n \leq y: n \in \N\} \vee \sup\{b_n \leq y: n \in \N\} \vee x \\
b &= \inf\{ a_n \geq y: n \in \N\} \wedge \inf\{b_n \geq y: n \in \N\} \wedge z
\end{align*}
We have $x \leq a \leq y \leq b \leq z$. By definition of $\delta$, we have 
\begin{align*}
\delta(x,y) + \delta(y,z) - \delta(x,z) 
&= \br{ \delta(x,a) + \delta(a,y)} + \br{ \delta(y,b) + \delta(b,z)} \\
&\hspace{30pt} - \br{ \delta(x,a) + \delta(a,b) + \delta(b,z)} \\
&= \delta(a,y) + \delta(y,b) -\delta(a,b).
\end{align*}
Bu our choice of $a$ and $b$, either $a,y,b \in I_n$ for some $n \geq N$, or $(a, b) \cap \bigcup_{n \in \N} I_n = \emptyset$. In the latter case, we have 
\[ \delta(x,y) + \delta(y,z) - \delta(z,x) = \delta(a,y) + \delta(y,b) -\delta(a,b) = 0. \]
In the former case, Lemma \ref{lemma_rough_isometries_1} gives
\begin{align*}
0 &\leq \delta(x,y) + \delta(y,z) - \delta(z,x) 
= \delta(a,y) + \delta(y,b) -\delta(a,b)  \\
&\leq \br{2 - 2^{\alpha_n}} \delta(a,b) 
= \br{2 - 2^{\alpha_N}} \min\{ s_N, \delta(x,y) \}. 
\end{align*}
Since $\alpha_N \leq \alpha_n \leq 1$, we have our desired conclusion.
\end{proof}

\begin{lemma}\label{lemma_rough_isometries_3'}
Let $p \in \R$ and $r > 0$ be arbitrary. Suppose $N = \inf\{n \in \N: \wbar{B}_{\delta}(p,r)\cap I_n \neq \emptyset\} < \infty$. Let $a = \inf\{x \in \R: \delta(x,p) \leq r\}$ and $b = \sup\{x \in \R: \delta(x,p) \leq r\}$. The map 
\begin{align*}
\psi: ([a,b], p, \delta) \to ([-r, r],0,d)\\
\psi(x) = \begin{cases}
-\delta(p,x) ,& x \leq p \\
\delta(p,x),& x > p.
\end{cases}
\end{align*}
is a $\vep$-rough isometry, where $\vep = \br{2 - 2^{\alpha_N}} \min\{ s_N, 2r\}$.
\end{lemma}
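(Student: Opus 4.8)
\emph{Proof proposal.} The plan is to first determine the shape of the ball $\wbar B_\delta(p,r)$ and the behavior of the map $\psi$, and then check the three defining conditions of an $\vep$-rough isometry, with the distortion condition (3) carrying all the content.

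First I would record two facts that follow directly from the definition of $\delta$: the function $x\mapsto\delta(p,x)$ is continuous on $\R$, and it is non-increasing on $(-\infty,p]$ and non-decreasing on $[p,\infty)$. Continuity holds because $\delta$ coincides with the Euclidean metric off $\bigcup_n I_n$, coincides with the (continuous) metric $\delta_n$ on each $I_n$, and the two descriptions agree at the endpoints $a_n,b_n$ in view of $\delta_n(a_n,b_n)=s_n=d(a_n,b_n)$; the monotonicity follows from the same case analysis, using that on a single $I_n$ the maps $\delta_n(a_n,\cdot)$ and $\delta_n(\cdot,b_n)$ are monotone because $\varphi_n$ is increasing, and that crossing an interval between its endpoints costs the same in $\delta$ as in $d$. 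Since $\delta$ is unbounded (it is Euclidean far from the origin), $\wbar B_\delta(p,r)$ is a closed bounded interval, which by the definitions of $a$ and $b$ is exactly $[a,b]$; and by continuity, using that $\wbar B_\delta(p,r)$ is closed while every neighborhood of $a$ (and of $b$) leaves the ball, one gets $\delta(p,a)=\delta(p,b)=r$. Hence $\psi$ is well defined, takes values in $[-r,r]$, is continuous and non-decreasing, and $\psi([a,p])=[-r,0]$, $\psi([p,b])=[0,r]$ by the intermediate value theorem, so $\psi([a,b])=[-r,r]$. This settles conditions (1) (clear, as $\psi(p)=0$) and (2) (the image is all of $[-r,r]$).

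For condition (3), let $x_1,x_2\in[a,b]$; since $\psi$, $d$ and $\delta$ are symmetric in the two points, assume $x_1\le x_2$. If $p\le x_1\le x_2$, then $\psi(x_i)=\delta(p,x_i)$ and by monotonicity $d(\psi(x_1),\psi(x_2))=\delta(p,x_2)-\delta(p,x_1)$, so the quantity to estimate is exactly $\delta(p,x_1)+\delta(x_1,x_2)-\delta(p,x_2)$; Lemma \ref{lemma_rough_isometries_2}, applied to the triple $p\le x_1\le x_2$, bounds it between $0$ and $(2-2^{\alpha_{N'}})\min\{s_{N'},\delta(p,x_1)\}$, where $N'=\inf\{n:\{p,x_1,x_2\}\cap I_n\ne\emptyset\}$. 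The case $x_1\le x_2\le p$ is symmetric, with $d(\psi(x_1),\psi(x_2))=\delta(p,x_1)-\delta(p,x_2)$ and Lemma \ref{lemma_rough_isometries_2} applied to $x_1\le x_2\le p$. In the straddling case $x_1\le p\le x_2$ one has $d(\psi(x_1),\psi(x_2))=\delta(p,x_1)+\delta(p,x_2)$, which dominates $\delta(x_1,x_2)$ by the triangle inequality, and the difference $\delta(x_1,p)+\delta(p,x_2)-\delta(x_1,x_2)$ is again controlled by Lemma \ref{lemma_rough_isometries_2} applied to the triple $x_1\le p\le x_2$.

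What remains is to absorb all of these bounds into the single quantity $\vep=(2-2^{\alpha_N})\min\{s_N,2r\}$, and this step is where I expect the main difficulty to lie. Since $\{p,x_1,x_2\}\subseteq[a,b]=\wbar B_\delta(p,r)$, any $I_n$ meeting the triple also meets the ball, so $N'\ge N$, and since $(\alpha_n)$ is increasing, $2-2^{\alpha_{N'}}\le 2-2^{\alpha_N}$. For the other factor, each small quantity occurring above ($\delta(p,x_1)$, $\delta(x_1,p)$, $\delta(x_1,x_2)$) is at most $2r$ because the relevant points lie in $\wbar B_\delta(p,r)$, and it is at most $s_{N'}\le s_N$ provided $(s_n)$ is non-increasing, which can be arranged in the construction (for instance by also choosing the $c_n$ so that $L(\alpha_n,c_n)$ is increasing). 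Thus $(2-2^{\alpha_{N'}})\min\{s_{N'},\cdot\}\le(2-2^{\alpha_N})\min\{s_N,2r\}=\vep$ in every case, which completes condition (3). The main obstacle is precisely this comparison of the active snowflake index $N'$ of the triple with the index $N$ of the ball; once it is in place, the rest is routine bookkeeping on top of Lemma \ref{lemma_rough_isometries_2}.
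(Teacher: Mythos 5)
Your proof follows the same route as the paper's: split into the three orderings of $x_1,x_2$ relative to $p$ and apply Lemma \ref{lemma_rough_isometries_2} to the resulting triple in each case. You are in fact more careful than the paper about the discrepancy between the ball's index $N$ and the triple's index $N'$; your fix (arranging $s_n$ to be non-increasing) is legitimate, and alternatively one can simply use $\min\{s_{N'},\delta(x_1,x_2)\}\le 2r$, i.e.\ weaken the bound to $(2-2^{\alpha_N})\cdot 2r$, which is all that the subsequent Lemma \ref{lemma_rough_isometries_3} actually needs.
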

\begin{proof}
Since $\psi$ is surjective and fixes $p$, it remains to check that for all $x,y \in [a,b]$, we have 
\[ \br{d(\psi(x), \psi(y)) - \delta(x,y)} \leq \vep. \]
Suppose $x \leq y$. If $x \leq y \leq p$, then 
\begin{align*}
\abs{d(\psi(x), \psi(y)) - \delta(x,y)} 
&= \abs{ \abs{ \delta(p,x) - \delta(p,y)} - \delta(x,y)} \\
&= \abs{\delta(p,y) - \delta(p, x) - \delta(x,y)} \\
&\leq \br{2 - 2^{\alpha_N}} \min\{ s_N, \delta(p,x)\}  \\
&= \br{2 - 2^{\alpha_N}} \min\{ s_N, 2r\} .
\end{align*}
The second to last inequality is a consequence of Lemma \ref{lemma_rough_isometries_2}. If $x \leq p \leq y$, then 
\begin{align*}
\abs{d(\psi(x), \psi(y)) - \delta(x,y)} 
&= \abs{ \abs{ \delta(p,x) - \delta(p,y)} - \delta(x,y)} \\
&= \abs{\delta(p,y) + \delta(p, x) - \delta(x,y)} \\
&\leq \br{2 - 2^{\alpha_N}} \min\{ s_N, \delta(x,y)\}  \\
&= \br{2 - 2^{\alpha_N}} \min\{ s_N, 2r\}.
\end{align*}
If $p \leq x \leq y$, then following a similar arguement as when $x \leq y \leq p$, we get
\begin{align*}
\abs{d(\psi(x), \psi(y)) - \delta(x,y)}
\leq \br{2 - 2^{\alpha_N}} \min\{ s_N, 2r\}
\end{align*}
This verifies that $\psi$ is a $\vep$-rough isometry.
\end{proof}

\begin{lemma}\label{lemma_rough_isometries_3}
For $r \in (0,1)$, we have 
\[ \sup_{p \in \R} d_{GH}((\wbar{B}_{\delta}(p,r), p, \delta),(\wbar{B}_{d}(0,r), 0, d)) = o(r) \]
as $r \to 0$.
\end{lemma}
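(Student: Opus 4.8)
The plan is to split according to the ``rough-isometry index'' $N(p,r):=\inf\{n:\wbar{B}_\delta(p,r)\cap I_n\neq\emptyset\}$ and to handle the finitely many small values of $N$ by an estimate sharper than what Lemma~\ref{lemma_rough_isometries_3'} gives directly. First recall (from the explicit formulas defining $\delta$, which make $\delta(\cdot,p)$ continuous and monotone on each side of $p$) that $\wbar{B}_\delta(p,r)$ is a closed interval $[a,b]$, with $a,b$ as in Lemma~\ref{lemma_rough_isometries_3'} and $\delta(a,p)=\delta(b,p)=r$. Consequently, if $N(p,r)=\infty$ then $\delta=d$ on $\wbar{B}_\delta(p,r)=[p-r,p+r]$, so $d_{GH}\big(\wbar{B}_\delta(p,r),\wbar{B}_d(0,r)\big)=0$; and if $N:=N(p,r)<\infty$, Lemma~\ref{lemma_rough_isometries_3'} produces a rough isometry, whence
\[
d_{GH}\big(\wbar{B}_\delta(p,r),\wbar{B}_d(0,r)\big)\le\big(2-2^{\alpha_N}\big)\min\{s_N,2r\}\le 2\big(2-2^{\alpha_N}\big)\,r.
\]
Given $\eta>0$, since $\alpha_n\uparrow 1$ we fix $N_0$ with $2(2-2^{\alpha_n})<\eta$ for all $n\ge N_0$; then every $p$ with $N(p,r)\ge N_0$ already satisfies $d_{GH}<\eta r$, for every $r$.

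The crux is the base points $p$ with $1\le N(p,r)<N_0$, where the bound above is only $O(r)$ and not $o(r)$. Here I would exploit that $\varphi_n$ is \emph{linear} on $[0,c_n]$, so that on $I_n$ the metric $\delta_n=s_n\varphi_n(s_n^{-1}d)$ coincides exactly with $L(\alpha_n,c_n)\,d$ on any pair of points at Euclidean distance at most $c_ns_n$; the snowflake part of $\varphi_n$, which is what produces the error term $2-2^{\alpha_n}$, is invisible below scale $c_ns_n$. I claim that for each fixed $N$ there is $r_N>0$ such that whenever $r<r_N$ and $N(p,r)=N$, the pointed space $(\wbar{B}_\delta(p,r),p,\delta)$ is \emph{isometric} to $([-r,r],0,d)$, hence contributes $0$. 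Indeed, for $r$ small compared with the gaps adjacent to $I_N$ and with $c_Ns_NL(\alpha_N,c_N)$ and $c_{N+1}s_{N+1}L(\alpha_{N+1},c_{N+1})$, the interval $[a,b]=\wbar{B}_\delta(p,r)$ meets only $I_N$ and possibly the neighbouring interval $I_{N+1}$, and meets each only in a subinterval of Euclidean diameter at most $c_\bullet s_\bullet$; on those subintervals $\delta$ is a constant multiple of Euclidean distance, and across the finitely many intervening gaps (where $\delta=d$) distances are additive by the defining relations, with no triangle-inequality slack since $\varphi_\bullet$ is linear on all the ranges involved. Thus $\delta|_{[a,b]}$ is the length metric of a piecewise-constant speed function $\lambda$ with values in $\{1,L(\alpha_N,c_N),L(\alpha_{N+1},c_{N+1})\}$, and the antiderivative $x\mapsto\int_a^x\lambda$ is an isometry of $([a,b],\delta)$ onto the Euclidean interval $[0,2r]$ carrying $p$ to $r$; a translation then gives the required isometry onto $([-r,r],0,d)$.

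Setting $r^\ast:=\min\{r_1,\dots,r_{N_0-1}\}>0$, for every $0<r<r^\ast$ and every $p\in\R$ we get $d_{GH}\big(\wbar{B}_\delta(p,r),\wbar{B}_d(0,r)\big)\le\eta r$: distance $0$ when $N(p,r)=\infty$ or $N(p,r)<N_0$, and $<\eta r$ when $N(p,r)\ge N_0$. Hence $\sup_{p}d_{GH}\big(\wbar{B}_\delta(p,r),\wbar{B}_d(0,r)\big)\le\eta r$ for all $r<r^\ast$, and since $\eta>0$ was arbitrary the supremum is $o(r)$ as $r\to 0$.

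The main obstacle is precisely this passage to the sharper estimate for the finitely many small indices $N$: Lemma~\ref{lemma_rough_isometries_3'} alone yields only an $O(r)$ bound there, and one must use the explicit linear portion of $\varphi_N$ to see that small balls near $I_N$ are genuinely flat. A secondary point requiring care is the bookkeeping that, for $r$ small relative to $N$, a radius-$r$ ball about a point with $N(p,r)=N$ meets only $I_N$ and $I_{N+1}$ and only inside their linear ranges; this uses that the gaps between consecutive $I_n$ and the numbers $c_ns_nL(\alpha_n,c_n)$ are bounded below over any finite range of $n$.
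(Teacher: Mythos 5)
Your proof is correct and follows essentially the same route as the paper: both split on the index $N$ of the first interval $I_n$ met by the ball, use Lemma \ref{lemma_rough_isometries_3'} to get the bound $2(2-2^{\alpha_N})r$ when $N$ is forced to be large, and observe that for small $r$ relative to the scale $s_N\varphi_N(c_N)$ of the linear part of $\varphi_N$ the ball is genuinely isometric to a Euclidean interval. The paper organizes the dichotomy as $r\gtrless \tfrac{s_N}{2}\varphi_N(c_N)$ rather than $N\gtrless N_0$, but the content is the same; your write-up is, if anything, more explicit about why the small-ball case yields an exact isometry.
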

\begin{proof}
Let $p \in \R$ and $r > 0$ be arbitrary. Let $N = \inf\{n \in \N: \wbar{B}_{\delta}(p,r)\cap I_n \neq \emptyset\}$. If $N = +\infty$, then $\delta = d$ on $\wbar{B}_{\delta}(p,r)$. We have 
\[ d_{GH}((\wbar{B}_{\delta}(p,r), p, \delta),(\wbar{B}_{d}(0,r), 0, d)) = 0. \]
If $N < \infty$, we consider 2 cases:

Case 1: $r \geq \frac{s_N}{2}\varphi_N(c_N)$. As $r \to 0$, $N \to +\infty$, therefore $\br{2 - 2^{\alpha_N}} \to 0$. In this case Lemma \ref{lemma_rough_isometries_3'} implies
\begin{align*}
d_{GH}((\wbar{B}_{\delta}(p,r), p, \delta),(\wbar{B}_{d}(0,r), 0, d)) \leq 2\br{2 - 2^{\alpha_N}} r = o(r).
\end{align*} 

Case 2: $r < \frac{s_N}{2}\varphi_N(c_N)$. In this case, $\delta$ is a length metric on $[a,b]$, and $\psi$ is an isometry between two length spaces. We have 
\[ d_{GH}((\wbar{B}_{\delta}(p,r), p, \delta),(\wbar{B}_{d}(0,r), 0, d)) = 0. \]
\end{proof}

\begin{proof}[Proof of Proposition \ref{Prop_1}]
Let $\{a_n\}_{n \in \N}$ be a sequence in $\R$ and $\{\lambda_n\}_{n \in \N}$ be a sequence of positive numbers that diverges to $+\infty$. By Lemma \ref{lemma_rough_isometries_3}, 
\begin{align*}
&d_{GH}\br{(\wbar{B}_{\lambda_n \delta}(a_n, R ), a_n, \lambda_n \delta),(\wbar{B}_{\lambda_n d}(0, R), 0, \lambda_n d)} \\
&\hspace{30pt}= \lambda_n d_{GH}\br{(\wbar{B}_{\delta}(a_n, \lambda_n^{-1}R ), a_n, \delta),( \wbar{B}_{d}(0, \lambda_n^{-1}R), 0, d)} = \lambda_n o(\lambda_n^{-1}R).
\end{align*}
But $(\wbar{B}_{\lambda_n d}(0, R), 0, \lambda_n d) = \wbar{B}_{d}(0, R), 0, d))$ by symmetry of $\R$. As $n \to \infty$, $\lambda_n^{-1}R \to 0$. We have
\[ (\wbar{B}_{\lambda_n \delta}(a_n, R), a_n, \delta ) \to \wbar{B}_{d}(0, R), 0, d). \]
This is true for all $R > 0$. We conclude that $(\R, a_n, \lambda_n \delta) \to (\R, 0, d)$.
\end{proof}

\section{Linear Local Contractibility and Assouad Dimension of $(\R, \delta)$}

In this section we establish two properties of the space $(\R, \delta)$. These properties often appear in the study of quasisymmetry classes of metric spheres. Both properties are discussed in detail in \cite{Heinonen}.

\begin{defn}
Let $C > 1$ be a constant. A metric space is \textit{$C$-linearly locally contractible} if every small ball is contractible inside a ball whose radius is $C$ times larger.  A metric space is \textit{linearly locally contractible} if it is $C$-linearly locally contractible for some $C > 0$.
\end{defn}

\begin{defn}
Let $N > 0$. A metric space is \textit{$N$-doubling} if for all $R > 0$, every open ball of $2R$ can be covered by $N$ balls of radius $R$. A metric space is \textit{doubling} if it is $N$-doubling for some $N > 0$.
\end{defn}

Both doubling and linear local contractibility are preserved under quasisymmetry. The Euclidean spaces like $\R^n$ or $\mathbb{S}^n$ are doubling and linearly locally contractible. The doubling property also ensures the existence of weak tangents.

\begin{prop}\label{Prop_3}
The space $(\R, \delta)$ is $1$-linearly locally contractible.
\end{prop}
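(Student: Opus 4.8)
The plan is to show that every $\delta$-ball in $(\R,\delta)$ is an interval of $\R$ (hence contractible) and that it contracts to its center \emph{within itself} by the obvious Euclidean straight-line homotopy; since this uses no enlargement of the ball, it yields $1$-linear local contractibility.

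The first step is a monotonicity observation: for every $p\in\R$ the function $t\mapsto\delta(p,t)$ is non-decreasing on $[p,+\infty)$ and non-increasing on $(-\infty,p]$. This follows directly from the piecewise definition of $\delta$ in Section 3: on each $I_n=[a_n,b_n]$ one has $\delta_n(u,v)=s_n\varphi_n\br{s_n^{-1}|u-v|}$ with $\varphi_n$ an increasing homeomorphism of $[0,1]$, so $\delta_n$ is monotone in each variable; on $\R\setminus\bigcup_i I_i$ the metric is Euclidean; and in the mixed cases (3)--(5) $\delta(x,y)$ is a sum of such monotone terms along the path running monotonically from $x$ to $y$, the contributions matching up continuously at the endpoints $a_n,b_n$. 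Since moreover $t\mapsto\delta(p,t)$ is $1$-Lipschitz for $\delta$ and hence continuous for the standard topology of $\R$, which $\delta$ induces by construction, the monotonicity forces $B_\delta(p,r)=\{t\in\R:\delta(p,t)<r\}$ to be an \emph{open} interval $(\alpha,\beta)$ with $\alpha<p<\beta$; it is bounded because $\delta$ coincides with the Euclidean metric outside the bounded set $\bigcup_i I_i$, so that $\delta(p,t)\to+\infty$ as $|t|\to+\infty$. The same reasoning shows $\wbar B_\delta(p,r)$ is a bounded closed interval.

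Finally, fix $p\in\R$ and $r>0$, write $B_\delta(p,r)=(\alpha,\beta)$, and define $H\colon(\alpha,\beta)\times[0,1]\to\R$ by $H(x,t)=(1-t)x+tp$. Because $(\alpha,\beta)$ is convex in $\R$ and contains both $x$ and $p$, we have $H(x,t)\in(\alpha,\beta)=B_\delta(p,r)$ for all $(x,t)$; also $H(\cdot,0)=\id$ and $H(\cdot,1)\equiv p$; and $H$ is continuous as a map into $\br{B_\delta(p,r),\delta}$ since $\delta$ and the Euclidean metric induce the same topology on $\R$. Thus $B_\delta(p,r)$ contracts to a point inside $B_\delta(p,1\cdot r)=B_\delta(p,r)$, i.e.\ $(\R,\delta)$ is $1$-linearly locally contractible. (If the definition is read with closed balls, the same $H$ works on the closed interval $\wbar B_\delta(p,r)$.) The only point demanding care is the bookkeeping in the second step confirming that $\delta$-balls really are intervals; the contraction itself is purely topological and poses no obstacle.
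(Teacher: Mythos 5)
Your proposal is correct and follows essentially the same route as the paper: the paper likewise observes that every ball in $(\R,\delta)$ is an interval, that $x\mapsto\delta(p,x)$ is monotone on either side of the center $p$, and then contracts the ball to $p$ via the straight-line homotopy $H(x,t)=tx+(1-t)p$, which stays inside the ball. Your write-up simply supplies more of the bookkeeping (continuity of the homotopy via the coincidence of topologies, boundedness of balls) that the paper leaves implicit.
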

\begin{proof}
Any open ball $B$ in $(\R, \delta)$ is an open interval $(a,b)$. Denote $p$ the center of $B$ (in $(\R, \delta)$.) Note that the map $x \mapsto \delta(p, x)$ is increasing on $\{x \in \R: x \geq p$, and decreasing on $\{x \in \R: x \leq p\}$. Therefore the map $H(x,t) = tx + (1-t)p$ is a homotopy of $(a,b)$ to $\{p\}$ in $B$. This proves that $(\R, \delta)$ is $1$-linearly locally contractible.
\end{proof}

\begin{prop}\label{Prop_2}
The space $(\R, \delta)$ is doubling.
\end{prop}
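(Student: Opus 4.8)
The plan is to reduce the claim to two ingredients: that the modified arcs $(I_n,\delta_n)$ are doubling \emph{with a constant independent of $n$}, and a telescoping estimate that controls how a ball of $(\R,\delta)$ is distributed among the arcs.

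\textbf{The arcs are uniformly doubling.} I would first show there is $N_0=N_0(\alpha_1)$ so that every $(I_n,\delta_n)$ is $N_0$-doubling. Since $\delta_n(x,y)=s_n\varphi_n\!\big(s_n^{-1}|x-y|\big)$ and $\varphi_n\colon[0,1]\to[0,1]$ is an increasing homeomorphism, the closed $\delta_n$-ball of radius $r$ about $q\in I_n$ equals $\big[q-s_n\varphi_n^{-1}(r/s_n),\,q+s_n\varphi_n^{-1}(r/s_n)\big]\cap I_n$ when $r<s_n$, and equals $I_n$ when $r\ge s_n$. So covering $\delta_n$-balls is the same as covering Euclidean intervals, and $(I_n,\delta_n)$ is $N_0$-doubling once $\sup_{t\in(0,1/2]}\varphi_n^{-1}(2t)/\varphi_n^{-1}(t)$ and $1/\varphi_n^{-1}(1/2)$ are bounded uniformly in $n$. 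Both follow from the explicit formula for $\varphi_n^{-1}$: it is linear on $[0,L(\alpha_n,c_n)c_n]$ and equals $c_n(1-\alpha_n)+(1-c_n(1-\alpha_n))\,t^{1/\alpha_n}$ beyond that, and the only exponent that appears is $1/\alpha_n\le 1/\alpha_1$. The finitely many $I_n$ for which $c_n$ is not yet small enough are compact metric spaces, hence individually doubling, and one enlarges $N_0$ to absorb them.

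\textbf{Two structural facts about $\delta$.} (i) From the gluing rules together with the concavity of $\varphi_n$ and $\varphi_n(0)=0=1-\varphi_n(1)$ one gets $\delta_n\ge d$ on $I_n$, hence $\delta\ge d$ on $\R$; combined with the monotonicity of $x\mapsto\delta(p,x)$ on each side of $p$ (noted in the proof of Proposition~\ref{Prop_3}), this makes every ball $\bar B_\delta(p,2R)$ a closed Euclidean interval $[u,v]\subseteq[p-2R,p+2R]$, of Euclidean length at most $4R$ and $\delta$-diameter at most $4R$. (ii) Since $\diam_\delta I_n=\delta_n(a_n,b_n)=s_n=b_n-a_n$, a full arc is traversed in $\delta$ at its Euclidean cost; unwinding the rules gives, for $x\le y$,
\[ \delta(x,y)=\sum_n\diam_\delta\!\big(I_n\cap[x,y]\big)+\big|\,[x,y]\setminus\textstyle\bigcup_n I_n\,\big| . \]
In particular $\sum_n\diam_\delta(I_n\cap[u,v])\le 4R$, and if $x,y\in[u,v]$ lie outside the interiors of all arcs then $\delta(x,y)=d(x,y)$.

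\textbf{Covering a ball.} Fix $\bar B=\bar B_\delta(p,2R)=[u,v]$ and call $I_n$ \emph{large} if $I_n\cap[u,v]\ne\varnothing$ and $s_n\ge R/3$. By (ii), at most two large arcs (those meeting $\{u,v\}$ in their interiors) can fail to lie inside $[u,v]$, and the lengths of the rest sum to $\le 4R$, so there are at most $14$ large arcs. For each large $I_n$, the set $I_n\cap[u,v]$ has $\delta$-diameter $\le 4R$ and sits inside the $N_0$-doubling space $(I_n,\delta_n)$, hence is covered by an absolutely bounded number of closed $\delta_n$-balls of radius $R$, each contained in a $\delta$-ball of radius $R$. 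It remains to cover $E:=[u,v]\setminus\bigcup_{I_n\text{ large}}\operatorname{int}I_n$, a union of at most $15$ subintervals on each of which every arc meeting its interior is \emph{small}, $s_n<R/3$. On such a component I would take a maximal $(R/3)$-separated subset $\{q_j\}$ of the component minus all arc interiors (boundedly many points, by the length bound) and check, using (ii), that the radius-$R$ $\delta$-balls about the $q_j$ cover it: a point outside all arcs lies within $\delta$-distance $R/3$ of some $q_j$, while a point in a small arc $I_n$ has $\delta$-distance $<R/3$ to the endpoint $b_n$, which is itself within $R/3$ of some $q_j$. Summing up, $\bar B$ is covered by a number of radius-$R$ balls bounded solely in terms of $\alpha_1$, which is the assertion. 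The main obstacle is that a single ball can meet infinitely many arcs $I_n$ — those accumulating at $0$ — so one cannot treat arcs one by one; what rescues the argument is the telescoping identity in (ii), which caps the number of arcs that are not already negligible at scale $R$, together with the fact that an arc of $\delta$-diameter $<R/3$ is swallowed by any radius-$R$ ball centred near it. The other place demanding care is the \emph{uniformity} of $N_0$ in Step~1, which is exactly where $\alpha_n\to 1$, hence $\inf_n\alpha_n=\alpha_1>0$, is used.
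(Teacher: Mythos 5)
Your argument is correct (up to routine edge cases, e.g.\ an arc straddling an endpoint of $[u,v]$, or the whole ball lying inside a single arc), but it takes a genuinely different route from the paper. The paper never covers balls of $(\R,\delta)$ by hand: it computes a \emph{covering function} $\vep\mapsto 2\vep^{-\alpha_n^{-1}}$ for each arc $(I_n,\delta_n)$ --- using Lemma \ref{lemma_sup_multiplicative} (sub-multiplicativity of $\varphi_{\alpha,c}$) to locate the worst ratio $\varphi_n^{-1}(y)/\varphi_n^{-1}(\vep y)$ at $y=1$ --- and then, in Proposition \ref{Prop_4}, lets $\alpha_n^{-1}\to 1$ to conclude that $(\R,\delta)$ has Assouad dimension $1$; doubling is then automatic since finite Assouad dimension implies doubling. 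Your proof instead establishes uniform doubling of the arcs directly from the explicit form of $\varphi_n^{-1}$ (using only $\inf_n\alpha_n=\alpha_1>0$, not $\alpha_n\to1$), and assembles a covering of an arbitrary ball via the telescoping identity $\delta(x,y)=\sum_n\diam_\delta(I_n\cap[x,y])+|[x,y]\setminus\bigcup_nI_n|$, which caps the number of arcs that are non-negligible at scale $R$. What each buys: your approach is self-contained, produces an explicit doubling constant depending only on $\alpha_1$, and makes fully explicit the step the paper leaves implicit (how the covering functions of infinitely many arcs accumulating at $0$ combine into one for $(\R,\delta)$ --- your identity in (ii) is exactly the missing glue); the paper's approach is weaker as a proof of doubling per se but proves the strictly stronger statement that the Assouad dimension equals $1$, which is not a consequence of your argument and is needed later for Theorem \ref{Q-AR-counter-example}, where Theorem \ref{Heinonen_AR_2} is applied for every $Q>2$. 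Two small points to watch: your separated net $\{q_j\}$ must indeed be taken in the complement of the arc interiors (as you do), since inside an arc the $\delta$-diameters of subintervals are superadditive and a $\delta$-separated set could otherwise be large; and the disjointness of the arcs $I_n$, which both your identity and the paper's gluing rules tacitly require, needs $s_n\le \frac1n-\frac1{n+1}$ rather than the stated $s_n<2(\frac1n-\frac1{n+1})$.
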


If a metric space $X$ is doubling, then there exists $\beta > 0$ and $C > 0$ such that for all $\vep \in (0, 1/2)$ and $r > 0$, any set of diameter $r$ in $X$ can be covered by at most $C \vep^{-\beta}$ subsets of diameter at most $\vep r$. The function $\vep \mapsto C \vep^{-\beta}$ is called the \textit{covering function} of $X$. The Assouad dimension of $X$ is defined to be the infimum of all $\beta$ so that a covering function of the form $\vep \mapsto C \vep^{-\beta}$ of $X$ exists. Conversely, any metric space of finite Assouad dimension is doubling. Proposition \ref{Prop_2} will follow from the stronger proposition below. 

\begin{lemma}
For each $n \in \N$, the function $f_n(\vep) = 2\vep^{-\alpha_n^{-1}}$ is a covering function of $(I_n, \delta)$.
\end{lemma}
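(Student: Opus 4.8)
The plan is to show directly that for each $n$, any subset $S \subseteq I_n$ of diameter $r$ (measured in $\delta = \delta_n$) can be covered by at most $2\vep^{-\alpha_n^{-1}}$ subsets of $\delta$-diameter at most $\vep r$. The key point is that on $I_n$ the metric $\delta_n = s_n \varphi_n \circ (s_n^{-1} d)$ is, up to the rescaling by $s_n$, just $\varphi_n$ applied to the Euclidean metric; since $\varphi_n = \varphi_{\alpha_n, c_n}$ is a homeomorphism of $[0,1]$, concave, and satisfies the submultiplicativity estimate of Lemma \ref{lemma_sup_multiplicative}, the map $x \mapsto \delta_n(a_n, x)$ distorts Euclidean length in a controlled, essentially H\"older, way. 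So the strategy is: pass to Euclidean coordinates on $I_n$, use the concavity and submultiplicativity of $\varphi_n$ to compare $\delta_n$-balls with Euclidean intervals, and then just count how many equally-spaced Euclidean subintervals are needed.

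First I would reduce to the normalized picture: rescaling by $s_n^{-1}$, it suffices to treat the interval $[0,1]$ with metric $\rho(x,y) = \varphi_n(|x-y|)$ (using that $\varphi_n$ is concave with $\varphi_n(0)=0$, so $|x-y|\mapsto\varphi_n(|x-y|)$ is genuinely a metric). Given $S$ with $\rho$-diameter $r$, let $J$ be the smallest Euclidean interval containing $S$, of Euclidean length $\ell$; by monotonicity of $\varphi_n$ we have $r = \varphi_n(\ell)$, i.e. $\ell = \varphi_n^{-1}(r)$. Now partition $J$ into $m$ Euclidean subintervals $J_1,\dots,J_m$ of equal Euclidean length $\ell/m$. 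Each $J_i$ has $\rho$-diameter $\varphi_n(\ell/m) = \varphi_n(\ell \cdot \tfrac1m) \le \varphi_n(\ell)\varphi_n(\tfrac1m) = r\,\varphi_n(1/m)$ by Lemma \ref{lemma_sup_multiplicative}. So it is enough to choose $m$ as small as possible subject to $\varphi_n(1/m) \le \vep$, equivalently $1/m \le \varphi_n^{-1}(\vep)$, i.e. $m = \ceil{1/\varphi_n^{-1}(\vep)}$.

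It then remains to bound $\ceil{1/\varphi_n^{-1}(\vep)}$ by $2\vep^{-\alpha_n^{-1}}$. On the interval $[c_n,1]$ where $\varphi_n(x) = \br{\tfrac{x - c_n(1-\alpha_n)}{1 - c_n(1-\alpha_n)}}^{\alpha_n} \le x^{\alpha_n/?}$ — more usefully, since $\varphi_n$ is concave with $\varphi_n(1)=1$ and $\varphi_n(0)=0$, we have $\varphi_n(t)\ge t$ for $t\in[0,1]$, hence $\varphi_n^{-1}(\vep)\le \vep$; but we need a lower bound on $\varphi_n^{-1}(\vep)$, i.e. an upper bound $\varphi_n(t)\le t^{\alpha_n}$ for $t \in [0,1]$. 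This last inequality is the crux: on $[c_n,1]$ one checks $\tfrac{t - c_n(1-\alpha_n)}{1-c_n(1-\alpha_n)}\le t$ (true since the left side is an affine function of $t$ agreeing with $t$ at $t=1$ and lying below it for $t<1$ because the slope $\tfrac{1}{1-c_n(1-\alpha_n)} > 1$), so $\varphi_n(t)\le t^{\alpha_n}$; and on $[0,c_n]$ one uses concavity together with $\varphi_n(c_n) = c_n^{\alpha_n}\cdot(\text{something}\le 1)$... this needs the explicit value, namely $\varphi_n(c_n) = L(\alpha_n,c_n)c_n = \br{\tfrac{c_n\alpha_n}{1-c_n(1-\alpha_n)}}^{\alpha_n}\le c_n^{\alpha_n}$, and linearity on $[0,c_n]$ then gives $\varphi_n(t) = \tfrac{t}{c_n}\varphi_n(c_n)\le \tfrac{t}{c_n}c_n^{\alpha_n} = t\,c_n^{\alpha_n-1}\le t^{\alpha_n}$ whenever $t\le c_n$ (since $t\le c_n$ forces $t^{1-\alpha_n}\le c_n^{1-\alpha_n}$). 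Granting $\varphi_n(t)\le t^{\alpha_n}$ on $[0,1]$, we get $\varphi_n^{-1}(\vep)\ge \vep^{1/\alpha_n}$, hence $m = \ceil{1/\varphi_n^{-1}(\vep)}\le \ceil{\vep^{-1/\alpha_n}} \le 2\vep^{-\alpha_n^{-1}}$ for $\vep\in(0,1/2)$, since $\vep^{-1/\alpha_n}\ge 1$ and $\ceil{x}\le 2x$ for $x\ge 1$. This gives a covering of $S$ by $m\le 2\vep^{-\alpha_n^{-1}}$ sets of $\rho$-diameter $\le \vep r$ (we may intersect each $J_i$ with $S$), which is exactly the claim after undoing the $s_n$-rescaling.

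The main obstacle I anticipate is precisely verifying the pointwise bound $\varphi_n(t)\le t^{\alpha_n}$ for all $t\in[0,1]$ — this is where the specific form of $L(\alpha_n,c_n)$ and the gluing of the linear and power pieces at $c_n$ have to be used carefully; everything else (reducing to $\varphi_n$ on $[0,1]$, the partition argument, and the final ceiling estimate) is routine once submultiplicativity (Lemma \ref{lemma_sup_multiplicative}) and concavity are in hand.
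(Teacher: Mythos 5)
Your proof is correct and follows essentially the same route as the paper: both reduce to counting Euclidean subintervals, invoke the submultiplicativity of $\varphi_n$ (Lemma \ref{lemma_sup_multiplicative}) to reduce the count to roughly $1/\varphi_n^{-1}(\vep)$, and establish $\varphi_n^{-1}(\vep)\ge\vep^{1/\alpha_n}$ by the same two-case computation on $[0,c_n]$ and $[c_n,1]$. The only cosmetic difference is that you apply submultiplicativity to $\varphi_n$ directly on equal-length Euclidean pieces, whereas the paper applies it to the ratio $\varphi_n^{-1}(y)/\varphi_n^{-1}(\vep y)$ and observes that the supremum is attained at $y=1$.
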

\begin{proof}
Every subinterval of $I_n$ of $\delta$-diameter $r \in [0, s_n]$ has $d$-diameter $s_n\varphi_n^{-1}(s_n^{-1}r)$. Thus our goal is to show that for every $r \in [0, s_n]$, and every $\vep \in (0, 1/2)$, every subset of $I_n$ of $d$-diameter $s_n\varphi_n^{-1}(s_n^{-1}r)$ can be covered by no more than $(\vep^{-\alpha_n}+1)$-many subintervals of $I_n$ of $d$-diameter at most $s_n\varphi_n^{-1}(s_n^{-1}\vep r)$. The number of subintervals we need can be bounded from above by
\begin{align*}
\frac{s_n\varphi_n^{-1}(s_n^{-1}r)}{s_n\varphi_n^{-1}(s_n^{-1}\vep r)} + 1
\leq \sup_{y \in (0,1]} \frac{\varphi_n^{-1}(y)}{\varphi_n^{-1}(\vep y)} + 1
= \sup_{y \in [\varphi_n(c_n),1]} \frac{\varphi_n^{-1}(y)}{\varphi_n^{-1}(\vep y)} + 1.
\end{align*}
We claim that the last supremum is attained when $y = 1$. This is equivalent to 
\begin{equation}\label{equation_2}
\frac{\varphi_n(\varphi_n^{-1}(\vep y))}{\varphi_n(\varphi_n^{-1}(y))} \leq \varphi_n\br{ \frac{\varphi_n^{-1}(\vep y)}{\varphi_n^{-1}(y)}}
\end{equation}
Inequality (\ref{equation_2}) is true by Lemma \ref{lemma_sup_multiplicative}. 

Suppose $\varphi_n(x_0) = \vep$. When $\vep > \varphi_n(c_n)$, we have $x_0 > c_n$, and
\begin{align*}
\vep^{\alpha_n^{-1}} = \frac{x_0 - c(1-\alpha_n)}{1-c(1-\alpha_n)} \leq x_0.
\end{align*}
When $0 < \vep < \varphi_n(c_n)$, we have $x_0 < c_n$ and $\vep = \varphi_n(x_0) = \frac{x_0}{c} \varphi_n(c_n)$. Since $ \varphi_n(c_n)^{\alpha_n^{-1}} \leq c$, we have 
\begin{align*}
\frac{1}{x_0} 
= \frac{\varphi(c)}{c\vep}
= \vep^{-\alpha_n^{-1}} \br{ \frac{\vep}{\varphi(c)}}^{\alpha_n^{-1}-1} \frac{\varphi(c)^{\alpha_n^{-1}}}{c} \leq \vep^{-\alpha_n^{-1}}. 
\end{align*}
In any case, we can take the covering function of $I_n$ to be
\begin{align*}
\vep^{-\alpha_n^{-1}}+1 \leq 2\vep^{-\alpha_n^{-1}}. 
\end{align*}
\end{proof}

\begin{prop}\label{Prop_4}
The Assouad dimension of $(\R, \delta)$ is $1$.
\end{prop}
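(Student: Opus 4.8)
The plan is to reduce the Assouad dimension bound for $(\R,\delta)$ to the covering bounds already obtained for the individual segments $I_n$, together with the fact that the complement of $\bigcup_n I_n$ carries the Euclidean metric, which is $1$-dimensional. First I would note that the Assouad dimension is always at least $1$ here, since $(\R,\delta)$ contains isometric copies of Euclidean intervals (any subinterval of $\R\bs\bigcup_n I_n$), so only the upper bound $\le 1$ requires work; equivalently I must show that for every $\beta>1$ there is a constant $C_\beta$ so that $\vep\mapsto C_\beta \vep^{-\beta}$ is a covering function of $(\R,\delta)$.

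The key geometric observation is that a set $S\subseteq\R$ of $\delta$-diameter $r$ meets at most a controlled number of the segments $I_n$ in an ``interesting'' way: since the endpoint-distance $\delta(a_n,b_n)=s_n$ and since $s_nL(\alpha_n,c_n)$ is decreasing with $\sum_n s_nL(\alpha_n,c_n)<\infty$, for any fixed $r$ only finitely many $I_n$ have $\ell(I_n)=s_nL(\alpha_n,c_n)\ge r$, and the $I_n$ are disjoint subintervals of $[0,1]$ accumulating only at $0$. I would argue that it suffices to handle a set $S$ whose $\delta$-diameter $r$ is small, and split into two regimes according to the index $N=\inf\{n:S\cap I_n\neq\emptyset\}$. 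When $r$ is small relative to $s_N$ and $S$ lies (essentially) inside a single $I_n$, the segment lemma gives a covering by at most $2\vep^{-\alpha_n^{-1}}\le 2\vep^{-\alpha_1^{-1}}$ pieces — wait, that exponent blows up; instead I use that for the upper bound $\beta>1$ we have $\alpha_n^{-1}\to 1$, so for $n$ large enough $\alpha_n^{-1}\le\beta$, and only finitely many segments $I_1,\dots,I_{M}$ have $\alpha_n^{-1}>\beta$; those finitely many bad segments contribute only a fixed multiplicative constant to the covering function.

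Concretely, I would fix $\beta\in(1,2)$, choose $M$ with $\alpha_n^{-1}\le\beta$ for all $n>M$, and then given $S$ of $\delta$-diameter $r$ and a scale $\vep$, cover $S$ as follows: the part of $S$ in $\R\bs\bigcup_{n}I_n$ is a Euclidean set, coverable by $O(\vep^{-1})\le O(\vep^{-\beta})$ pieces; the part of $S$ in each $I_n$ with $n>M$ is coverable by $2\vep^{-\alpha_n^{-1}}\le 2\vep^{-\beta}$ pieces by the Lemma; and $S$ meets only boundedly many such $I_n$ nontrivially — here I use the decreasing, summable nature of the $\ell(I_n)$ to bound by a constant $K=K(r)$ the number of segments of length $\ge \vep r$ that $S$ can reach, or more simply observe that at scale comparable to $r$ the segments behave like points. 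For the finitely many segments $I_1,\dots,I_M$, each is a fixed compact metric space of finite Assouad dimension (being bi-Lipschitz to a snowflake of a compact interval), hence has its own covering function $\vep\mapsto C_n\vep^{-\beta_n}$; taking $\beta'=\max(\beta,\beta_1,\dots,\beta_M)$ and absorbing constants yields a covering function $\vep\mapsto C\vep^{-\beta'}$ for all of $(\R,\delta)$. Letting $\beta\to 1$ and checking that the bad exponents $\beta_n$ can also be taken close to $1$ (since $\alpha_n^{-1}$ is just slightly above $1$ for moderate $n$ too, and in any case we only need the infimum over valid exponents) gives Assouad dimension exactly $1$.

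The main obstacle I anticipate is the bookkeeping in the ``gluing'' step: making precise that a $\delta$-ball or a set of $\delta$-diameter $r$ interacts with at most $O(1)$ segments $I_n$ in a way that is not already Euclidean, and that the covering pieces chosen inside different segments and in the Euclidean part patch together into a genuine cover of $S$ by sets of $\delta$-diameter $\le\vep r$ — one must be careful that a covering piece straddling an endpoint $a_n$ or $b_n$ has controlled $\delta$-diameter, which follows from property (5) in the definition of $\delta$ and the triangle-type estimate in Lemma~\ref{lemma_rough_isometries_2}. Once that combinatorial estimate is in hand, the dimension count is routine.
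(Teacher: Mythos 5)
Your overall strategy is the same as the paper's: fix $\beta>1$, use the covering function $2\vep^{-\alpha_n^{-1}}\le 2\vep^{-\beta}$ for all but finitely many segments $I_n$, treat the finitely many exceptional segments and the Euclidean complement separately, and glue. There is, however, one step that fails as written. For the exceptional segments $I_1,\dots,I_M$ you propose to use their snowflake exponents $\beta_n=\alpha_n^{-1}$ and pass to $\beta'=\max(\beta,\beta_1,\dots,\beta_M)$. Since the $\alpha_n$ are increasing, this maximum is $\alpha_1^{-1}$, a \emph{fixed} number strictly greater than $1$; it does not tend to $1$ as $\beta\to1$ (on the contrary, $M\to\infty$ and the exceptional set only grows), and your hedge that ``the bad exponents can also be taken close to $1$'' is unjustified, as $\alpha_1$ is an arbitrary element of $(0,1)$. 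As stated, your argument only yields Assouad dimension at most $\alpha_1^{-1}$. The repair is to note that each \emph{individual} $(I_n,\delta_n)$ is bi-Lipschitz to a Euclidean interval, not merely to a snowflake: since $t\le\varphi_n(t)\le L(\alpha_n,c_n)\,t$ on $[0,1]$ (concavity gives the lower bound, and $\varphi_n(t)/t$ decreasing gives the upper one), we have $d\le\delta_n\le L(\alpha_n,c_n)\,d$ on $I_n$. Hence each fixed $I_n$ has Assouad dimension exactly $1$ and admits a covering function $C_n\vep^{-\beta}$ for the \emph{same} exponent $\beta$, with a constant $C_n$ depending on $n$; a maximum over finitely many such constants then finishes that case. (For what it is worth, the paper's own constant $\max_{n<N}2^{\alpha_n^{-1}-\beta}$ does not dominate $\vep^{\beta-\alpha_n^{-1}}$ as $\vep\to0$ either, so the published proof needs the same repair.)

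Your second flagged issue, the gluing, is also genuinely unresolved both in your sketch and in the paper, which simply asserts that a uniform covering function for the pieces gives one for $(\R,\delta)$. Be aware that a bound $K(r)$ on the number of relevant segments that depends on $r$ is not admissible: the covering function must be uniform over all locations and scales. A set $S$ of $\delta$-diameter $r$ can meet on the order of $\vep^{-1}$ segments with $s_n\ge\vep r$, so naively multiplying by the per-segment count $2\vep^{-\beta}$ gives the useless exponent $1+\beta$. The correct bookkeeping is to absorb every $I_n$ with $\diam_\delta(S\cap I_n)<\vep r$ into a single covering piece, and for the remaining segments to use the count $2\bigl(r_n/(\vep r)\bigr)^{\alpha_n^{-1}}$ with $r_n=\diam_\delta(S\cap I_n)$; since $\sum_n r_n\le r$ by the additive structure of $\delta$ and $t\mapsto t^\beta$ is superadditive for $\beta\ge1$, these counts sum to at most $\vep^{-\beta}$. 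With these two repairs your argument goes through and coincides, up to the level of detail, with the paper's proof.
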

\begin{proof} 
Let $\beta > 1$ be arbitrary. There exists $N \in \N$ such that when $n \geq N$, $\alpha^{-1} <\beta$. Let $C = \max_{n < N}\{2^{\alpha_n^{-1} - \beta}\} \geq 1$. Then the function $\vep \mapsto 2C \vep^{-\beta}$, where $\vep \in (0, 1/2]$, is a covering function of $(I_n, \delta)$ for all $n \in \N$. Thus $\vep \mapsto 4C \vep^{-\beta}$ is a covering function of $(\R, \delta)$. 

Since $(-\infty, 0)$ has Assouad dimension $1$, the proposition follows.
\end{proof}

\section{Higher Dimension Construction}

Let $d \geq 2$. We will denote by $d_{Euclid}$ the Euclidean metric on $\R^d$. Let 
\[ X_d = \br{\R \times \R^{d-1}, \sqrt{\delta^2 + d_{Euclid}^2}}\]
be the product of $(\R, \delta)$ and $(\R^{d-1}, d_{Euclid})$. Write $\rho_n = \sqrt{\delta^2 + d_{Euclid}^2}$. Here are some facts about $X_d$.

\begin{prop}\label{HD_Prop_1}
\begin{enumerate}[(a)]
\item Every weak tangent of $X_d$ is isometric to $(\R^d, 0, d_{Euclid})$. 
\item $X_d$ is doubling and linearly locally contractible.
\end{enumerate}
\end{prop}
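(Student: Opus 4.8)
The plan is to reduce both parts to the one--dimensional results of Sections 3--5, exploiting that $X_d$ is the $\ell^2$--product $(\R,\delta)\times(\R^{d-1},d_{Euclid})$ and that the second factor is homogeneous Euclidean space.

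\emph{Part (a).} Fix a weak tangent of $X_d$, i.e.\ basepoints $p_n=(x_n,v_n)\in\R\times\R^{d-1}$ and positive integers $\lambda_n\to+\infty$ so that $(X_d,p_n,\lambda_n\rho_n)$ converges in the pointed Gromov--Hausdorff sense; I will identify the limit with $(\R^d,0,d_{Euclid})$. Fix $R>0$. By Proposition \ref{Prop_1} there are, for all large $n$, $\eta_n$--rough isometries $\phi_n\colon(\wbar{B}_{\lambda_n\delta}(x_n,2R),x_n,\lambda_n\delta)\to(\wbar{B}_{d}(0,2R),0,d)$ with $\eta_n\to0$. On the Euclidean factor, $w\mapsto\lambda_n(w-v_n)$ is an isometry of $(\R^{d-1},v_n,\lambda_n d_{Euclid})$ onto $(\R^{d-1},0,d_{Euclid})$. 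Set $\Phi_n(x,v)=(\phi_n(x),\lambda_n(v-v_n))\in\R\times\R^{d-1}=\R^d$. Using the elementary inequality $\abs{\sqrt{a^2+c^2}-\sqrt{b^2+c^2}}\le\abs{a-b}$ one gets, for all $(x,v),(x',v')$ in the rescaled ball $\wbar{B}_{\lambda_n\rho_n}(p_n,R)$,
\[
\abs{d_{Euclid}(\Phi_n(x,v),\Phi_n(x',v'))-\lambda_n\rho_n((x,v),(x',v'))}\le\abs{d(\phi_n x,\phi_n x')-\lambda_n\delta(x,x')}\le\eta_n,
\]
and, from the near--surjectivity and rough isometry of $\phi_n$, the image $\Phi_n\bigl(\wbar{B}_{\lambda_n\rho_n}(p_n,R)\bigr)$ is $O(\eta_n)$--dense in $\wbar{B}_{d_{Euclid}}(0,R)$. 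Since $\eta_n\to0$, the limit is $(\R^d,0,d_{Euclid})$. Equivalently, one may simply invoke the standard fact that pointed Gromov--Hausdorff convergence of sequences of metric spaces passes to their $\ell^2$--products.

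\emph{Part (b).} For doubling I combine Proposition \ref{Prop_4}, which gives that $(\R,\delta)$ has Assouad dimension $1$, with the fact that $(\R^{d-1},d_{Euclid})$ has Assouad dimension $d-1$: a set $S\subseteq X_d$ of diameter $r$ lies in the product of its two coordinate projections, each of diameter $\le r$; covering these by at most $C_\beta\vep^{-\beta}$ (for any fixed $\beta>1$) and at most $C\vep^{-(d-1)}$ sets of diameter $\le\vep r/\sqrt2$ respectively and forming products covers $S$ by at most $C_\beta C\,2^{(\beta+d-1)/2}\,\vep^{-(\beta+d-1)}$ sets of $\rho_n$--diameter $\le\vep r$. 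Hence $X_d$ has finite Assouad dimension, so it is doubling (in particular weak tangents exist). For linear local contractibility, note that every open ball $B=B_{\rho_n}((p,q),r)$ is contracted into itself by the straight--line homotopy $H((x,v),t)=(tx+(1-t)p,\,tv+(1-t)q)$, which is continuous with $H(\cdot,1)=\id_B$ and $H(\cdot,0)\equiv(p,q)$: since $s\mapsto\delta(p,s)$ is monotone on each side of $p$ (as in the proof of Proposition \ref{Prop_3}), $\delta(p,tx+(1-t)p)\le\delta(p,x)$ and $d_{Euclid}(q,tv+(1-t)q)\le d_{Euclid}(q,v)$, whence $\rho_n(H((x,v),t),(p,q))\le\rho_n((x,v),(p,q))<r$. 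Thus $X_d$ is $1$--linearly locally contractible.

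The only genuinely fiddly point is the bookkeeping in part (a): a metric ball of an $\ell^2$--product is not a product of balls, so in matching $\wbar{B}_{\lambda_n\rho_n}(p_n,R)$ with $\wbar{B}_{d_{Euclid}}(0,R)$ one must either sandwich balls between products of balls at the cost of the customary $1/\sqrt2$ factors, or --- more cleanly --- absorb the $O(\eta_n)$ discrepancy into the $\vep$--slack already built into the definition of weak tangent in Section 4 (comparing balls of radius $R+\vep$ with balls of radius $R$). Everything else is routine verification.
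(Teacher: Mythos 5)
Your proposal is correct and follows essentially the same route as the paper: part (a) is a detailed verification of the fact (which the paper simply asserts) that weak tangents of the product are products of weak tangents, obtained by combining the rough isometries from Proposition \ref{Prop_1} with Euclidean isometries on the second factor, and part (b) uses that products of doubling spaces are doubling together with a coordinatewise contraction homotopy. The only divergence is cosmetic — you contract the ball by a single straight--line homotopy (correctly using the monotonicity of $\delta(p,\cdot)$ from Proposition \ref{Prop_3}) where the paper contracts one factor at a time — and your handling of the ball-versus-product-of-balls bookkeeping via the $\vep$--slack in the definition of pointed Gromov--Hausdorff convergence is sound.
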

\begin{proof}
\begin{enumerate}[(a)]
\item Every weak tangent of $X_d$ is of the form $(T \times \R^{n-1}, (x, 0), d_T \times d_{Euclid})$, where $(T, x, d_T)$ is a weak tangent of $(\R, \delta)$. 
\item Recall that $(\R, \delta)$ is doubling (Proposition \ref{Prop_2}). $X_d$ is product of doubling metric space, hence doubling. By Proposition \ref{Prop_3}, $(\R, \delta)$ is $C$-linearly localy contractible for some $C > 1$. Let $x = (x_1, x_2)$ be any point in $\R \times \R^{d-1}$, and $r >0$ be arbitrary. The ball $B(x,r)$ in $X_d$ can first be contracted to $\{x_1\} \times B(x_2, r)$ within a $B(x, Cr)$, which can then be contracted to the point $\{x_1, x_2\}$. 
\end{enumerate}
\end{proof}

Every finite segment in $(\R, \delta)$ is rectifiable. Let $\mu_1$ be a the measure on $(\R, \delta)$ given by length. For $d \geq 2$, let $\mu_d$ be the product measure $\mu_1 \times \lambda_{d-1}$ on $X_d$, where $\lambda_{d-1}$ is the $(d-1)$-dimensional Lebesgue measure on $\R^{d-1}$. 

In the remaining of this section we show that $X_d$ is not quasisymmetrically equivalent to $\R^d$. To do that we consider a geometric quantity that is roughly preserved under quasisymmetry called modulus. Given a family $\Gamma$ of curves in a measured metric space $(X, d_X, \mu)$, we say that a function $\rho:X \to [0,\infty)$ is admissible if for all $\gamma \in \Gamma$,
\begin{align*}
\int_\gamma \rho(x) \,ds \geq 1.
\end{align*}
Let $Q > 0$. We define the \textit{$Q$-modulus} of $\Gamma$ as
\begin{align*}
\mod_Q(\Gamma) = \inf\{ \int_X \rho^Q \,d\mu : \rho \text{ admissible }\}.
\end{align*}
Let $E, F \subset X$ be two disjoint nondegerate continua in $X$. Let $\Gamma_{E,F}$ to be the collection of all rectifiable curves joining $E$ and $F$. We write $\mod_Q(E,F) = \mod_Q(\Gamma_{E,F})$. 

Moduli behave nicely under quasisymmetry, as illustrated by the following theorem.
\begin{thm}[Tyson, \cite{Tyson98}]\label{Tyson_theorem}
Let $X, Y$ be locally compact, connected, $Q$-Ahlfors regular metric spaces, where $Q > 1$, and $f:X \to Y$ be a quasisymmetric homeomorphism. Then there exists $C > 1$ such that for all curve family $\Gamma \subset X$, we have 
\[ \frac{1}{C} \mod_Q(\Gamma) \leq \mod_Q(f(\Gamma)) \leq C\mod_Q(\Gamma). \]
\end{thm}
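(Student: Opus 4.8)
This is Tyson's theorem, so the plan below reconstructs a proof along the standard lines of transporting an admissible density. First I would reduce to a single inequality: by symmetry it suffices to produce a constant $C$, depending only on $\eta$ and the regularity data, with $\mod_Q(f(\Gamma))\le C\,\mod_Q(\Gamma)$ for every curve family $\Gamma$ in $X$, since applying this to the quasisymmetry $f^{-1}\colon Y\to X$ and the family $f(\Gamma)$ (note $f^{-1}(f(\Gamma))=\Gamma$) gives the reverse bound. So I would fix $\Gamma$ and a density $\rho\colon X\to[0,\infty]$ admissible for $\Gamma$ with $\int_X\rho^Q\,d\mu_X<\infty$, and aim to build a density $\tilde\rho$ on $Y$ admissible for $f(\Gamma)$ with $\int_Y\tilde\rho^Q\,d\mu_Y\le C\int_X\rho^Q\,d\mu_X$; taking infima then settles this inequality.

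Next I would record the geometric input. For $x\in X$ and $r>0$ put $L_f(x,r)=\sup\{d_Y(f(x),f(x')):d_X(x,x')\le r\}$ and $l_f(x,r)=\inf\{d_Y(f(x),f(x')):d_X(x,x')\ge r\}$. An $\eta$-quasisymmetry satisfies $B_Y(f(x),l_f(x,r))\subseteq f(B_X(x,r))\subseteq B_Y(f(x),L_f(x,r))$ with $L_f(x,r)\le\eta(1)\,l_f(x,r)$, i.e.\ images of balls are uniformly roundish; also $f^{-1}$ is $\eta'$-quasisymmetric for some $\eta'=\eta'(\eta)$. Combining with $Q$-Ahlfors regularity of both spaces gives $\mu_X(B_X(x,r))\asymp r^Q$ and $\mu_Y(f(B_X(x,r)))\asymp l_f(x,r)^Q\asymp L_f(x,r)^Q$, with constants depending only on the data. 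The ratio $l_f(x,r)/r$ is the local linear distortion and $(l_f(x,r)/r)^Q$ the local volume distortion of $f$ at $x$.

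Then I would transport the density. For $y\in Y$ with $x=f^{-1}(y)$, set $\tilde\rho(y)=C_1\,\rho(x)\,H(y)$, where $H(y)$ is a suitably regularized version of $\limsup_{r\to0}\,r/l_f(x,r)$, the local stretch of $f^{-1}$ at $y$. For admissibility: a rectifiable curve in $f(\Gamma)$ is $f\circ\gamma$ for some $\gamma\in\Gamma$; parametrizing it by $Y$-arclength, the pulled-back curve $\gamma$ has metric speed at each point controlled by the local stretch of $f^{-1}$ there, which is precisely what the roundish-image estimate provides, so $\int_{f\circ\gamma}\tilde\rho\,ds_Y\ge C_1\int_\gamma\rho\,ds_X\ge C_1$; rescaling makes $C_1=1$, and non-rectifiable curves in $f(\Gamma)$ carry infinite line integral and are harmless. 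For the energy bound I would take a Vitali cover of $Y$ (up to null sets) by roundish images $f(B_X(x_i,r_i))$ realizing the suprema defining $H$, change variables back to $X$ via $\mu_Y(f(B_X(x,r)))\asymp(l_f(x,r)/r)^Q\,\mu_X(B_X(x,r))$, and use the Hardy--Littlewood maximal inequality on the doubling space $(X,\mu_X)$ to absorb the regularization, obtaining $\int_Y\tilde\rho^Q\,d\mu_Y\le C\int_X\rho^Q\,d\mu_X$.

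The step I expect to be the main obstacle is the energy bound, specifically the change of variables from $Y$ to $X$. A naive substitution would require $f$ to possess a genuine pointwise volume derivative, but the linear distortion $l_f(x,r)/r$ may oscillate as $r\to0$, so the ``length stretch'' weight $H$ used in the admissibility argument and the ``volume stretch'' appearing in the change of variables need not be reciprocal at each point; reconciling them is exactly what forces the Vitali-covering and maximal-function machinery together with the regularization of $H$, rather than a direct computation. By contrast, the curve-by-curve admissibility is essentially immediate once the arclength parametrizations are set up, and the symmetry reduction at the start is purely formal.
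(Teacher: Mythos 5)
The paper does not prove this statement: it is quoted verbatim as Tyson's theorem and imported from \cite{Tyson98}, so there is no in-paper proof to compare against. Judged on its own terms, your sketch has the right overall architecture --- the symmetry reduction via the $\eta'$-quasisymmetry of $f^{-1}$ is correct, and the two key inputs (roundness of images of balls, $L_f(x,r)\le\eta(1)\,l_f(x,r)$, and the scale-by-scale identity $\mu_Y(f(B_X(x,r)))\asymp l_f(x,r)^Q\asymp (l_f(x,r)/r)^Q\,\mu_X(B_X(x,r))$ coming from $Q$-regularity of both spaces) are exactly the right ones. In particular, the worry you raise at the end is not the real obstacle: because the displayed comparison holds uniformly at \emph{every} scale $r$, the length-stretch and volume-stretch are forced to be reciprocal up to constants depending only on the data, at each scale; no pointwise volume derivative is needed.

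The genuine gap is in the step you dismiss as ``essentially immediate,'' namely admissibility. Your density $\tilde\rho(y)=\rho(x)H(y)$ is built from an infinitesimal quantity ($\limsup_{r\to0} r/l_f(x,r)$), but a curve is a global object: to deduce $\int_{f\circ\gamma}\tilde\rho\,ds_Y\ge\int_\gamma\rho\,ds_X$ you need a length (chaining) estimate showing that the arclength of $\gamma$ is controlled by the integral of $H$ along $f\circ\gamma$, and this fails to follow from the pointwise definition alone --- the ``metric speed'' of $f^{-1}$ along an arbitrary rectifiable curve need not be governed by a $\limsup$ taken over balls centered at each point. The standard repair, and essentially what Tyson does, is to discretize: fix a scale $\epsilon>0$, take a bounded-overlap cover of $X$ by balls $B_i=B(x_i,\epsilon)$, set $\tilde\rho_\epsilon=\sum_i \epsilon^{-1}L_f(x_i,C\epsilon)\,\rho(x_i)\,\mathbbm{1}_{f(B_i)}$ (with $\rho$ suitably averaged), prove admissibility by chaining the balls met by each curve, prove the energy bound from bounded overlap plus $Q$-regularity (here $Q>1$ and connectedness are used to compare the discrete and continuous moduli), and only then let $\epsilon\to0$. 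This fixed-scale formulation simultaneously dissolves the oscillation issue you flag, since one never takes $r\to0$ inside the density. As written, your proposal would need this discrete chaining lemma supplied before either inequality is established.
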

See the next section for the definition of $Q$-Ahlfors regularity. 

We can now prove that $X_d$ is not quasisymmetrically equivalent to $\R^d$. The idea is that if the two spaces are quasisymmetrically equivalent, then for any curve family $\Gamma$, $\Gamma$ and $f(\Gamma)$ should have comparable moduli. We know that $\R^d$ has the property that any disjoint nondegerate continua $E, F$ in $\R^d$ satisfy
\begin{equation}\label{modulus}
\mod_d(E,F) \leq \phi\br{ \frac{d(E,F)}{\diam(E) \wedge \diam(F)}}.
\end{equation}
for some non-increasing function $\phi:[0,\infty) \to (0,\infty)$. However, Proposition \ref{HD_Prop_3} shows that some sequence disjoint nondegerate continua $E_n, F_n$ in $X_d$ do not behaviour as in (\ref{modulus}). The only problem is that $X_d$ is not $d$-Ahlfors regular, so we cannot apply Theorem \ref{Tyson_theorem} directly. In Proposition \ref{HD_Prop_2}, however, we will show that the inequality
\[  \mod_d(E_n,F_n) \leq C \mod_d(f(E_n), f(F_n)) \]
holds for some $C$ independent of $C$. 

From now on, we will denote $\Delta(E,F) = \frac{d(E,F)}{\diam(E) \wedge \diam(F)}$.

\begin{prop}\label{HD_Prop_3}
There exists $E_n, F_n \subset X_d$ such that $\Delta(E_n, F_n) = \frac{d(E_n, F_n)}{\diam E_n \wedge \diam F_n} = 1$ and $\mod_d(E_n, F_n) \to +\infty$. 
\end{prop}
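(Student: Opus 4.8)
The plan is to exhibit $E_n$ and $F_n$ as two parallel copies of the finger $I_n$, placed at two different points of the Euclidean factor $\R^{d-1}$ of $X_d$. The reason this should work is that such a pair is joined by a huge family of \emph{short} curves — one vertical segment over each point of $I_n$ — and although $I_n$ has small $\delta$-diameter $s_n$, its length measure $\mu_1(I_n) = s_nL(\alpha_n,c_n)$ is far larger, so there are ``too many'' short curves relative to the size of the configuration, which is exactly what forces the $d$-modulus up.

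Concretely, writing $\R^{d-1} = \R^{d-2}\times\R$ (with $\R^0 = \{0\}$ when $d = 2$), I would take
\[ E_n = I_n\times\wbar{B}^{d-2}(0,s_n)\times\{0\},\qquad F_n = I_n\times\wbar{B}^{d-2}(0,s_n)\times\{h_n\},\qquad h_n := \diam(E_n). \]
The first step is elementary metric bookkeeping. Since $\delta(x,y) = s_n\varphi_n(s_n^{-1}|x-y|)\le s_n\varphi_n(1) = s_n$ for $x,y\in I_n$, the $\delta$-diameter of $I_n$ is exactly $s_n$ (attained at the endpoints), so $\diam E_n = \diam F_n = h_n$, where $h_n\asymp s_n$ ($h_n = s_n$ if $d = 2$, $h_n = \sqrt5\,s_n$ if $d\ge3$); moreover $d(E_n,F_n)$ is realized by two points sharing their first two coordinates, so $d(E_n,F_n) = h_n$. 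Hence $\Delta(E_n,F_n) = 1$ on the nose. One also checks that $E_n$ and $F_n$ are disjoint (distinct last coordinate) nondegenerate continua.

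For the modulus lower bound I would test an arbitrary admissible $\rho$ against the curves $\sigma_{x,z}(t) = (x,z,t)$, $t\in[0,h_n]$, indexed by $(x,z)\in I_n\times\wbar{B}^{d-2}(0,s_n)$; each lies in $\Gamma_{E_n,F_n}$ and is parametrized by arclength, so $\int_0^{h_n}\rho(x,z,t)\,dt\ge1$. Integrating this inequality over $(x,z)$ against $d\mu_1(x)\,d\lambda_{d-2}(z)$ and invoking Fubini on $R_n := I_n\times\wbar{B}^{d-2}(0,s_n)\times[0,h_n]$ gives $\int_{R_n}\rho\,d\mu_d\ge\mu_1(I_n)\,\lambda_{d-2}(\wbar{B}^{d-2}(0,s_n))$. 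Combining this with $\mu_d(R_n) = \mu_1(I_n)\,\lambda_{d-2}(\wbar{B}^{d-2}(0,s_n))\,h_n$ and Hölder's inequality yields
\[ \int_{X_d}\rho^d\,d\mu_d\;\ge\;\int_{R_n}\rho^d\,d\mu_d\;\ge\;\frac{\big(\mu_1(I_n)\,\lambda_{d-2}(\wbar{B}^{d-2}(0,s_n))\big)^d}{\mu_d(R_n)^{d-1}}\;=\;\frac{\mu_1(I_n)\,\lambda_{d-2}(\wbar{B}^{d-2}(0,s_n))}{h_n^{d-1}}. \]
Since $\mu_1(I_n) = s_nL(\alpha_n,c_n)$, while $\lambda_{d-2}(\wbar{B}^{d-2}(0,s_n))$ and $h_n^{d-1}$ are dimensional constants times $s_n^{d-2}$ and $s_n^{d-1}$ respectively, every power of $s_n$ cancels and the right-hand side is a dimensional constant times $L(\alpha_n,c_n)$. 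Taking the infimum over admissible $\rho$ gives $\mod_d(E_n,F_n)\gtrsim L(\alpha_n,c_n)\to+\infty$ (using $L(\alpha_n,c_n)\to\infty$ from the choice of $\alpha_n,c_n$), which is the claim.

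The main point — and the step that is easiest to get backwards — is the choice of $E_n,F_n$. If instead one took, say, thickenings of the two \emph{endpoints} $a_n$ and $b_n$ of the finger, then every curve joining them would have to traverse all of $I_n$ and hence have length $\ge\ell(I_n) = s_nL(\alpha_n,c_n)\gg s_n$; that would make $\mod_d(E_n,F_n)$ \emph{small}, the opposite of what is needed. The remaining work is routine: picking the transverse ball of radius exactly $s_n$ so the $s_n$-powers cancel, verifying $\Delta = 1$ exactly, and separately (and trivially) treating $d = 2$, where there is no $\R^{d-2}$ factor.
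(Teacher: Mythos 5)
Your proposal is correct and is essentially the paper's own argument: two parallel copies of the finger $I_n$ separated by a distance comparable to $s_n$ in the Euclidean factor, tested against the family of short transverse segments via Fubini and H\"older, with the powers of $s_n$ cancelling to leave $\mod_d(E_n,F_n)\gtrsim s_n^{-1}\ell(I_n)=L(\alpha_n,c_n)\to\infty$. The only differences are cosmetic (a ball rather than a cube in the transverse directions, and a separation tuned to make $\Delta(E_n,F_n)=1$ exactly), and if anything your bookkeeping of the diameters is slightly more careful than the paper's.
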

\begin{proof}
Take 
$E_n = I_n \times \{0\} \times [0,s_n]^{d-2}, 
 F_n = I_n \times \{s_n\} \times [0,s_n]^{d-2}$. 
Then $d(E_n, F_n) = s_n$, and $\diam E_n = \diam F_n = s_n$, therefore 
\[ \Delta(E_n, F_n) = \frac{d(E_n, F_n)}{\diam E_n \wedge \diam F_n} = 1. \]
For each $n \in \N$, and for $x \in I_n$ and $(v_2, \ldots, v_{d-1}) \in [0,s_n]^{d-2}$, let $\gamma_{x, v_2, \ldots, v_{d-1}}$ be the path
\[ t \mapsto \br{x, t, v_2, \ldots, v_{d-1}}, t \in [0,s_n]. \]
Let
\[ \Gamma_n = \{ \gamma_{x, v_2, \ldots, v_{d-1}} : x \in I_n, (v_2, \ldots, v_{d-1}) \in [0,s_n]^{d-2}\}\]
be the family of straight lines joining $E_n$ and $F_n$ that meet $E_n$ orthogonally. Then
\[ \mod_d(E_n, F_n) \geq \mod_d(\Gamma_n). \]

Let $\rho: X_d \to \R_{\geq 0}$ be an admissible function for $\Gamma_n$. This means for all $\gamma_{x, v_2, v_3, \ldots, v_{d-1}} \in \Gamma_n$, we have 
\[ \int_{\gamma_{x, v_2, v_3, \ldots, v_{d-1}}} \rho(t) \,dt 
= \int_0^{s_n} \rho(x,t,v_2, \ldots, v_{d-1}) \,dt \geq 1. \]
Integrating over $I_n \times [0,s_n]^{d-2}$ and applying Fubini's theorem, we get
\begin{align*} 
\int_{I_n \times [0,s_n]^{d-2}} \,d\mu_1 \times \lambda_{d-2}
&\leq \int_{I_n \times [0,s_n]^{d-2}} \int_{\gamma_{x, v_2, v_3, \ldots, v_{d-1}}} \rho(t) \,dt \,d(\mu_1 \times \lambda_{d-2}) \\
&= \int_{I_n \times [0,s_n]^{d-1}} \rho \,d\mu_d.
\end{align*}
Applying H\"older's inequality, we get
\begin{align*}
\int_{I_n \times [0,s_n]^{d-1}} \rho \,d\mu_d
&\leq \br{ \int_{I_n \times [0,s_n]^{d-1}} \,d\mu_d}^{\frac{1}{\delta}} \br{ \int_{I_n \times [0,s_n]^{d-1}} \rho^d \,d\mu_d}^{\frac{1}{d}}
\end{align*}
where $\delta$ is the conjugate exponent of $d$ (so $\frac{1}{\delta} + \frac{1}{d} = 1$). We have 
\begin{align*}
\int_{I_n \times [0,s_n]^{d-1}} \rho^d \,d\mu_d
&\geq \br{ \int_{I_n \times [0,s_n]^{d-1}} \,d\mu_d}^{-\frac{d}{\delta}}\br{\int_{I_n \times [0,s_n]^{d-2}} \,d\mu_1 \times \lambda_{d-2}}^d  \\
&=\br{ (s_n)^{(d-1)} \ell(I_n)}^{-\frac{d}{\delta}} \br{(s_n)^{d-2} \ell(I_n)}^d \\
&= 2^{n} \ell(I_n).
\end{align*}
This is true for all admissible function $\rho$, therefore
\[ \mod_d(\Gamma_n) \geq  s_n^{-1} \ell(I_n). \]
As $n \to \infty$, $s_n^{-1} \ell(I_n) = L(\alpha_n, c_n) \to +\infty$. We have 
\[ \mod_d(E_n, F_n) \geq \mod_d(\Gamma_n) \to +\infty. \]
\end{proof}

\begin{prop}\label{HD_Prop_2}
$X_d$ is not quasisymmetrically equivalent to any subset of $\R^d$. 
\end{prop}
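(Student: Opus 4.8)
The plan is to argue by contradiction, transporting the modulus blow-up of Proposition~\ref{HD_Prop_3} through $f$ into $\R^d$, where it is forbidden by (\ref{modulus}). Suppose $f\colon X_d\to\R^d$ is an $\eta$-quasisymmetric embedding; since $X_d$ is homeomorphic to $\R^d$, invariance of domain makes $f(X_d)$ open in $\R^d$. Let $E_n,F_n\subset X_d$ and the curve families $\Gamma_n$ joining them be as in the proof of Proposition~\ref{HD_Prop_3}, so that $\Delta(E_n,F_n)=1$ while $\mod_d(\Gamma_n)=\ell(I_n)/s_n=L(\alpha_n,c_n)\to+\infty$. An $\eta$-quasisymmetric map distorts the relative distance $\Delta$ of a pair of continua by a factor depending only on $\eta$, in both directions, so there are constants $0<c_0\le c_1<\infty$, depending only on $\eta$, with $c_0\le\Delta(f(E_n),f(F_n))\le c_1$ for every $n$; as $\phi$ in (\ref{modulus}) is non-increasing, $\mod_d(f(E_n),f(F_n))\le\phi(c_0)=:M<\infty$ uniformly in $n$. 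Because $f(\Gamma_n)\subset\Gamma_{f(E_n),f(F_n)}$, the proposition will follow once we establish the modulus comparison
\[
\mod_d(\Gamma_n)\ \le\ C\,\mod_d\br{f(\Gamma_n)},\qquad C=C(\eta,d)\ \text{independent of}\ n,
\]
since this forces $L(\alpha_n,c_n)\le CM$ for all $n$, a contradiction.

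To prove the comparison I would pull back test densities. Given $\rho\ge 0$ admissible for $f(\Gamma_n)$ on $f(X_d)$, put $\widehat\rho=(\rho\circ f)\cdot\operatorname{Lip}_f$, where $\operatorname{Lip}_f(x)=\limsup_{y\to x}\abs{f(x)-f(y)}/\rho_n(x,y)$, replacing $\operatorname{Lip}_f$ by the usual scale-averaged surrogate so that $\widehat\rho$ is admissible for $\Gamma_n$; it then suffices to show $\int_{X_d}\widehat\rho^{\,d}\,d\mu_d\le C\int_{\R^d}\rho^d\,d\lambda_d$. Let $\mu_f(x)=\limsup_{r\to 0^+}\lambda_d\br{f(B(x,r))}/\mu_d\br{B(x,r)}$ be the volume derivative of $f$. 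Since $\mu_d$ is a doubling measure on $X_d$, $\mu_f$ is $\mu_d$-a.e.\ the Radon--Nikodym density of $A\mapsto\lambda_d(f(A))$ with respect to $\mu_d$, and consequently $\int_{X_d}(\rho\circ f)^d\,\mu_f\,d\mu_d\le\int_{\R^d}\rho^d\,d\lambda_d$ always holds; so the whole estimate reduces to the \emph{pointwise} bound $\operatorname{Lip}_f(x)^d\le C\,\mu_f(x)$ for $\mu_d$-a.e.\ $x$. The key observation is that this holds with a constant depending only on $\eta$: at \emph{every} $x\in X_d$ and for all sufficiently small $r$ (a threshold depending on $x$) the ball $B(x,r)$ is isometric to a round Euclidean ball of radius $r$ and $\mu_d\br{B(x,r)}=\omega_d r^d$ exactly — this is immediate from the definition of $\delta$ and is the local Euclideanness of $X_d$ underlying the fact that all its weak tangents are $\R^d$ (cf.\ Proposition~\ref{HD_Prop_1}(a)). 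Because $f(X_d)$ is open and $f$ is $\eta$-quasisymmetric, $f(B(x,r))\supset B_{\R^d}\br{f(x),\,\eta(1)^{-1}\sup_{\rho_n(x,y)\le r}\abs{f(x)-f(y)}}$ for small $r$; dividing by $\mu_d(B(x,r))=\omega_d r^d$ and letting $r\to 0^+$ yields $\operatorname{Lip}_f(x)^d\le\eta(1)^d\,\mu_f(x)$.

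The point to watch — and the reason the proposition is not automatic — is the interplay between modulus and the \emph{failure} of Ahlfors $d$-regularity of $X_d$: near a segment $I_n$ one has $\mu_d(B(x,r))$ as large as a constant multiple of $L(\alpha_n,c_n)\,r^d$ for \emph{macroscopic} $r\le s_n$, with $L(\alpha_n,c_n)\to\infty$, so any estimate that involves balls at those scales loses a factor comparable to $\mod_d(\Gamma_n)$ itself and proves nothing. The argument above avoids this precisely because $\operatorname{Lip}_f$ and $\mu_f$ are infinitesimal quantities, at which scale $X_d$ is exactly Euclidean; the macroscopic non-regularity never enters. What still has to be handled with care — and what I expect to be the main technical content — is the admissibility of the pulled-back density for $\mod_d$-almost every curve of $\Gamma_n$: since $X_d$ supports no Poincaré inequality, this does not follow from the off-the-shelf theory of quasiconformal maps. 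I would dispose of it using the completely explicit nature of $\Gamma_n$ — straight segments along which $\rho_n$ restricts to the Euclidean metric — together with the Fubini decomposition of $\mod_d(\Gamma_n)$ over the genuinely Euclidean slices $\{x_1\}\times[0,s_n]^{d-1}$, $x_1\in I_n$, on each of which $f$ is an $\eta$-quasisymmetry between Euclidean objects and the absolute-continuity issue is classical.
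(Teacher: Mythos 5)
Your overall architecture coincides with the paper's: argue by contradiction, use quasisymmetric quasi-invariance of the relative distance $\Delta$ together with (\ref{modulus}) to bound $\mod_d(f(E_n),f(F_n))$ uniformly, and reduce everything to the comparison $\mod_d(\Gamma_n)\le C(\eta,d)\,\mod_d(f(\Gamma_n))$. The divergence is in how that comparison is proved. The paper never differentiates $f$: it covers $\Gamma_n^*$ by cubes $R_i$ of diameter $s_n\varphi_n(c_n)$ (the scale at which $\rho_n$ is exactly a rescaled Euclidean metric, so $\mu_d(R_i)=\diam(R_i)^d$), takes the discrete test density $(\diam f(\Gamma_n))^{-1}\sum_i\frac{\diam f(R_i)}{\diam R_i}\mathbbm{1}_{R_i}$, checks admissibility by a purely metric diameter count over the cubes met by a given $\gamma$, and bounds the energy by $(\diam f(\Gamma_n))^{-d}\sum_i\lambda_d(f(R_i))\lesssim 1$ using that the $f(R_i)$ are disjoint uniform quasidisks inside a controlled ball. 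No absolute continuity of $f$ enters anywhere.

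Your analytic route, by contrast, stands or falls on the admissibility of $\widehat\rho=(\rho\circ f)\operatorname{Lip}_f$, i.e.\ on the inequality $\int_\gamma(\rho\circ f)\operatorname{Lip}_f\,ds\ge\int_{f\circ\gamma}\rho\,ds$, which requires $f$ to be absolutely continuous along modulus-a.e.\ $\gamma\in\Gamma_n$. You correctly flag this as the main issue, but the fix you propose does not work: the restriction of $f$ to a slice $\{x_1\}\times[0,s_n]^{d-1}$, or to a single segment $\gamma_{x,v_2,\dots,v_{d-1}}$, is merely a quasisymmetric embedding of a $(d-1)$-cube (resp.\ an interval) into $\R^d$, and for such lower-dimensional embeddings absolute continuity is not ``classical'' --- it is false in general, since quasisymmetric images of segments in $\R^d$ can be non-rectifiable snowflake arcs. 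What would save the argument is not the slice structure but the full $d$-dimensional local structure: away from the $\mu_d$-null hyperplane $\{0\}\times\R^{d-1}$ (where the $I_n$ accumulate and where, contrary to your claim, small balls are \emph{not} isometric to Euclidean balls --- harmless, since the curves of $\Gamma_n$ avoid it), every point of $X_d$ has a neighborhood isometric to a Euclidean ball, and since $f(X_d)$ is open by invariance of domain, $f$ is there a quasiconformal homeomorphism between domains of $\R^d$; the classical ACL theory of such maps then gives absolute continuity outside a curve family of zero $d$-modulus. Equivalently and more simply, identify $I_n\times[0,s_n]^{d-1}$ with the Euclidean box $[0,\ell(I_n)]\times[0,s_n]^{d-1}$ by arclength in the first coordinate --- a length- and measure-preserving homeomorphism, hence modulus-preserving --- and note that the transported map is locally, hence globally, $K(\eta)$-quasiconformal. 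With either substitution your proof closes; as written, the admissibility step rests on a false classical citation, so it is a genuine gap.
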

\begin{proof}
For each $n \in \N$, and for $x \in I_n, (v_2, \ldots, v_{d-1}) \in [0,s_n]^{d-2}\}$, let $\gamma_{x, v_2, \ldots, v_{d-1}}$ be the path
\[ t \mapsto \br{x, t, v_2, \ldots, v_{d-1}}, t \in [0,s_n]. \]
Let
\[ \Gamma_n = \{ \gamma_{x, v_2, \ldots, v_{d-1}} : x \in I_n, (v_2, \ldots, v_{d-1}) \in [0,s_n]^{d-2}\}\]
be the family of straight lines joining $E_n$ and $F_n$ that meet $E_n$ orthogonally.
Suppose $f: X_d \to \R^d$ is a $\eta$-quasisymmetric embedding. We denote by $\gamma^*$ the image of the curve $\gamma$. For a path family $\Gamma$, define $\Gamma^* = \bigcup_{\gamma \in \Gamma} \gamma*$, and define $\diam(\Gamma) = \inf\{\diam \gamma: \gamma \in \Gamma\}$.

Choose $E_n$ and $F_n$ as in Proposition \ref{HD_Prop_3}. We know from $\diam(E_n) \wedge \diam(F_n) = d(E_n, F_n)$ that 
\[ \frac{d(f(E_n), f(F_n))}{\diam(f(E_n)) \wedge \diam(f(F_n))} \sim 1, \]
where the implicit constant for $\sim$ depends only on $\eta$. For the same reason, there exists $\alpha > 1$, depending only on $\alpha,\beta > 1$, depending only on $\eta$, so that $f(\Gamma_n)^* \subset B_n$ for a ball $B_n$ with diameter $r_n \leq \alpha \,d(E_n, F_n) \leq \beta \diam(f(\Gamma))$. 

For each $n \in \N$, we can cover $\Gamma^*$ by squares $\{R_i\}_{i \in I_n}$ of diameter $s_n\varphi_n(c_n)$ so that their sides are either parallel to the paths in $\Gamma$ or orthogonal to the paths in $\Gamma$. We can choose $\{R_i\}$ so that these $R_i$'s don't overlap and their union is precisely $\Gamma^*$. For each rectifiable path $\gamma$, denote by $\ell(\gamma)$ its length. 
Let 
\[ \rho_n = (\diam f(\Gamma_n))^{-1} \sum_{i \in I_n} \frac{\diam(f(R_i))}{\diam(R_i)} \mathbbm{1}_{f^{-1}(B_n) \cap R_i}. \]
be a function on $X_d$. 
For all $\gamma \in \Gamma_n$, 

 We have 
\[ \int_{\gamma} \rho_n(s) \,ds 
= (\diam f(\Gamma_n))^{-1} \sum_{i \in I_n} \frac{\diam(f(R_i))}{\diam(R_i)} \ell( f^{-1}(B_n) \cap R_i \cap \gamma). \]
For each $i$, $f^{-1}(B_n) \cap R_i \cap \gamma = R_i \cap \gamma$. When $R_i \cap \gamma \neq \emptyset$, $\ell(R_i \cap \gamma) = \diam(R_i)$. As $\{R_i\}_{i \in I_n}$ covers $\Gamma^*$, we have
\begin{align*}
 \int_{\gamma} \rho_n(s) \,ds 
\geq (\diam f(\Gamma_n))^{-1}  \sum_{i \in I_n, R_i \cap \gamma \neq \emptyset } \diam(f(R_i))
\geq  (\diam f(\Gamma_n))^{-1}  \diam (f(\gamma))
\geq 1.
\end{align*}
This means $\rho_n$ is admissible for $\Gamma_n$. We have 
\begin{align*}
\mod_d \Gamma_n
&\leq \int \rho_n^d \,d\mu_d 
= (\diam f(\Gamma_n))^{-d} \sum_{i \in I_n} \br{ \frac{\diam(f(R_i))}{\diam(R_i)}}^d \mu_d(f^{-1}(B_n) \cap R_i) \\
&= (\diam f(\Gamma_n))^{-d} \sum_{i \in I_n} \br{ \frac{\diam(f(R_i))}{\diam(R_i)}}^d \mu_d(R_i) \\
&= (\diam f(\Gamma_n))^{-d} \sum_{i \in I_n} \br{ \frac{\diam(f(R_i))}{\diam(R_i)}}^d \diam(R_i)^d.
\end{align*}
For the last equality, we make use of the fact that $R_i$ are chosen so small that $\mu_d(R_i) = \diam(R_i)^d$. We get
\begin{align*}
\mod_d \Gamma_n
\lesssim (\diam f(\Gamma_n))^{-d} \sum_{i \in I_n} \br{ \diam(f(R_i)) }^d
\lesssim (\diam f(\Gamma_n))^{-d} \sum_{i \in I_n} \lambda_d(f(R_i)).
\end{align*}
Here we use the fact that the Lebesgue $\lambda_d$ on $\R^d$ is $d$-Ahlfors regular and that $f(R_i)$ are uniform quaisdisks. Since $\{f(R_i)\}_{n\in\N}$ are disjoint subsets of $B_n$, we have 
\begin{align*}
\mod_d \Gamma_n \lesssim (\diam f(\Gamma_n))^{-d} \lambda_d(B_n) \lesssim 1.
\end{align*}
where all the implicit constants for $\lesssim$ depends only on $\eta$. But this is a contradiction to Proposition \ref{HD_Prop_3}. 
\end{proof}

From Prop \ref{HD_Prop_1} and Prop \ref{HD_Prop_2}, $X_d$ is homeomorphic to $\R^d$, doubling and linearly localy connected, it is not quasisymmetric to $\R^d$. Our proof shows that the any ball in $X_d$ centered at $0$ cannot be quasisymmetrically embedded into $\R^d$. 

We conclud this section by a proof of Theorem \ref{Theorem}. 
\begin{proof}[Proof of Thoerem \ref{Theorem}]
The $d$-dimensional unit ball $B(0,1)$ in $\R^d$, equipped with the metric 
\[ \wtil{\rho}(x, y) = \frac{\rho\br{ \frac{x}{1-\abs{x}}, \frac{y}{1-\abs{y}}}}{1+ \rho\br{ \frac{x}{1-\abs{x}}, \frac{y}{1-\abs{y}}}}. \]
The completion of the space $(B(0,1), \wtil{\rho})$ is $\wbar{B}(0,1)$, and the metric on the boundary is same as the Euclidean metric. Glue the space $\wbar{B}(0,1)$ with another hemisphere to form a topological $d$-sphere. This $d$-sphere is doubling, locally linearly contractible, and every weak tangent is isometric to $\R^d$, but it cannot be a quasisphere.
\end{proof}

\section{Ahlfors Regularity}
Let $Q > 0$. A measured metric space $(X, d, \mu)$ is said to be $Q$-Ahlfors regular if for all $x \in X$ and $r \leq \diam X$, we have
\[ \mu(B(x,r)) \sim r^Q. \]
Ahlfors regularity and doubling property of metric spaces are related notions. We record a result from \cite{Heinonen}:
\begin{thm} \label{Heinonen_AR_2}\cite[Theorem 14.6]{Heinonen}
Let $X$ be a complete, connected metric space of finite Assouad dimension $\beta$. Then for each $Q > \beta$, there exists a quasisymmetric homeomorphism of $X$ onto a closed $Q$-Ahlfors regular subset of some $\R^N$.
\end{thm}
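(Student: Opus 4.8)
I would prove this by packaging together two standard facts about doubling metric spaces, keeping careful track of the Ahlfors regularity exponent throughout. First I would reduce to the case that $X$ has more than one point --- a single point is trivial --- and observe that, since $X$ is connected, $y\mapsto d(x,y)$ is continuous with connected image, so every value in $(0,\diam X)$ is attained as a distance from $x$; hence $X$ is uniformly perfect. Recall too that finite Assouad dimension forces $X$ to be doubling, as noted in the text. I would then fix a target $Q>\beta$ and choose an auxiliary exponent $s$ with $\beta<s<Q$. The two ingredients are \textbf{(R)}, a \emph{regularization} theorem --- a uniformly perfect doubling metric space of Assouad dimension $\beta$ is quasisymmetrically equivalent to a metric $d'$ on $X$ for which $(X,d')$ is Ahlfors $s$-regular (this is where both uniform perfectness and the slack $s>\beta$ enter) --- and \textbf{(A)}, \emph{Assouad's embedding theorem} --- for a doubling space and any $\tau\in(0,1)$, the $\tau$-snowflake admits a bi-Lipschitz embedding into some $\R^N$.

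\textbf{Execution.} I would apply (R) to get $d'$ with $(X,d')$ Ahlfors $s$-regular, complete (since $(X,d)$ is), and quasisymmetrically equivalent to $(X,d)$ via the identity; then set $\tau=s/Q\in(0,1)$ and pass to the snowflake $(X,(d')^{\tau})$. Since $B_{(d')^{\tau}}(x,r)=B_{d'}(x,r^{1/\tau})$, the regular measure of such a ball is comparable to $(r^{1/\tau})^{s}=r^{Q}$, so $(X,(d')^{\tau})$ is Ahlfors $Q$-regular, and the identity $(X,d')\to(X,(d')^{\tau})$ is a quasisymmetry with control $\eta(t)=t^{\tau}$. Next I would apply (A) to $(X,d')$ with snowflaking exponent $\tau$, producing a bi-Lipschitz embedding $F\colon(X,(d')^{\tau})\hookrightarrow\R^{N}$. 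The composite $(X,d)\to\R^{N}$ is then a quasisymmetric embedding (a composition of quasisymmetries, bi-Lipschitz maps being quasisymmetric), and its image $A$, with the restricted Euclidean metric, is bi-Lipschitz equivalent to $(X,(d')^{\tau})$. Ahlfors $Q$-regularity transfers to $A$ since it is a bi-Lipschitz invariant (push the regular measure forward along $F$, and use that a bi-Lipschitz map sends each ball to a set caught between two balls of comparably scaled radii), and $A$ is closed in $\R^{N}$ as the bi-Lipschitz image of a complete space. This yields the desired quasisymmetric homeomorphism of $X$ onto a closed $Q$-Ahlfors regular subset of $\R^{N}$, for the arbitrarily chosen $Q>\beta$.

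\textbf{Main obstacle.} The hard part will be the regularization theorem (R). I would prove it via a Christ--David dyadic cube decomposition of $X$ (available precisely because $X$ is doubling): build a doubling measure on $X$ adapted to the cubes together with a conformal weight assembled from the cube data, and use them to define $d'$; the resulting chaining estimates make the identity $(X,d)\to(X,d')$ quasisymmetric, uniform perfectness is what propagates the lower mass bound $\mu(B(x,r))\gtrsim r^{s}$ down to all small scales --- so that $(X,d')$ is genuinely Ahlfors regular and not merely doubling --- and the gap $s>\beta$ provides the slack needed to redistribute mass at each scale. Assouad's embedding theorem (A) is the other substantial input but is entirely off the shelf; the remaining bookkeeping --- the snowflake computation, bi-Lipschitz invariance of Ahlfors regularity, and closedness of the image --- is routine.
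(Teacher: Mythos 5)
The paper offers no proof of this statement: it is quoted verbatim as Theorem 14.6 of Heinonen's \emph{Lectures on Analysis on Metric Spaces} and used as a black box, so there is nothing internal to compare your argument against. Your proposal is, in substance, a correct reconstruction of the proof in that reference: the reduction from connectedness to uniform perfectness via the intermediate value theorem is right, and the two pillars --- a quasisymmetric regularization of the metric followed by Assouad's bi-Lipschitz embedding of a snowflake --- are exactly the ones Heinonen uses. The only (cosmetic) divergence is in how you organize the regularization step: Heinonen first produces a doubling measure $\mu$ with growth exponent arbitrarily close to $\beta$ (Vol'berg--Konyagin/Luukkainen--Saksman, which needs completeness), then deforms the metric through the quasimetric $q_\epsilon(x,y)=\mu\bigl(B(x,d(x,y))\cup B(y,d(x,y))\bigr)^{\epsilon}$ and the chain-metric lemma, obtaining $Q$-regularity directly by tuning $\epsilon$; you instead first achieve $s$-regularity for an auxiliary $s\in(\beta,Q)$ and then snowflake by $\tau=s/Q$ to reach exponent $Q$. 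Both routes work, and your dyadic-cube construction of the weight is essentially how the doubling measure is built anyway; the one step you should not wave at too quickly is that the achievable regularity exponents really do go all the way down to $\beta$ (this is precisely the content of the sharp form of the doubling-measure existence theorem, not of the cube decomposition itself).
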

With these facts our example gives:
\begin{thm}\label{Q-AR-counter-example}
For every $Q > 2$, there exists a $Q$-Ahlfors regular and linearly locally contractible metric space $X$ that is topologically an $2$-sphere such that every weak tangent is uniformly quasisymmetric to $\R^2$ but $X$ is not quasisymmetric to the standard $2$-sphere.
\end{thm}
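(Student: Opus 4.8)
The strategy is to push the sphere of Theorem~\ref{Theorem} through Heinonen's embedding theorem (Theorem~\ref{Heinonen_AR_2}). Let $S$ be the topological $2$-sphere produced in the proof of Theorem~\ref{Theorem} (the case $d=2$): it is obtained from the round $\mathbb{S}^2$ by altering the metric on a bounded region so that this region becomes isometric to a bounded part of $X_2=(\R,\delta)\times\R$. Thus $S$ is a compact, hence complete, connected metric space, it is linearly locally contractible, every weak tangent of $S$ is isometric to $\R^2$ (by Proposition~\ref{HD_Prop_1}(a), using that a small neighborhood of any point of $S$ is isometric to a piece of $\R^2$ or of $X_2$), and $S$ is not quasisymmetrically equivalent to $\mathbb{S}^2$. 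I first record that the Assouad dimension satisfies $\dim_A S\le 2$: by Proposition~\ref{Prop_4} the space $(\R,\delta)$ has Assouad dimension $1$, so $X_2$ has Assouad dimension at most $\dim_A(\R,\delta)+\dim_A\R=2$ by subadditivity of Assouad dimension under Cartesian products; and $S$ is a finite union of a bi-Lipschitz copy of a region of the round $\mathbb{S}^2$ (Assouad dimension $2$) with a set isometric to a bounded subset of $X_2$, so, as Assouad dimension is monotone under subsets, invariant under bi-Lipschitz maps, and equal to the maximum of the values on the pieces of a finite union, $\dim_A S\le 2$.

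Next I would invoke Theorem~\ref{Heinonen_AR_2} with $\beta=\dim_A S\le 2$. For the given $Q>2$ we have $Q>\beta$, so there is an $\eta$-quasisymmetric homeomorphism $g\colon S\to X$ onto a closed $Q$-Ahlfors regular subset $X$ of some $\R^N$; this $X$ is the required space. It is $Q$-Ahlfors regular by construction; it is homeomorphic to $\mathbb{S}^2$ because $g$ is a homeomorphism and $S$ is; it is linearly locally contractible because $S$ is and, as recalled in Section~5, linear local contractibility is a quasisymmetry invariant; and it is not quasisymmetrically equivalent to $\mathbb{S}^2$, for a quasisymmetry $X\to\mathbb{S}^2$ composed with $g$ would be a quasisymmetry $S\to\mathbb{S}^2$, contradicting the last property of $S$.

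The remaining point, which I expect to be the main obstacle, is that every weak tangent of $X$ is uniformly quasisymmetric to $\R^2$; for this I would show that quasisymmetry descends to weak tangents with controlled distortion. Fix a weak tangent $(T,q,d_T)$ of $X$, realized by points $y_n\in X$ and scales $\mu_n\to\infty$. Put $x_n=g^{-1}(y_n)$ and choose $\lambda_n>0$ so that $g^{-1}\bigl(B_{d_X}(y_n,\mu_n^{-1})\bigr)$ is comparable, with constants depending only on $\eta$, to the ball $B_{d_S}(x_n,\lambda_n^{-1})$; this is possible because $S$ is connected, so $g^{-1}$ carries balls to sets comparable to balls with data depending only on $\eta$. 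Since $g$ and $g^{-1}$ are uniformly continuous on the compact space $S$, we have $\lambda_n\to\infty$, and scale invariance of the quasisymmetry inequality makes the rescaled maps $g\colon(S,x_n,\lambda_n d_S)\to(X,y_n,\mu_n d_X)$ uniformly $\eta$-quasisymmetric, with $g\bigl(B_{\lambda_n d_S}(x_n,R)\bigr)\subset B_{\mu_n d_X}(y_n,R')$ for some $R'=R'(R,\eta)$. Using the doubling property of $S$ and $X$ to pass to convergent subsequences in the pointed Gromov-Hausdorff topology, these maps converge to a quasisymmetric homeomorphism from a weak tangent $T'$ of $S$ onto $T$, with distortion function depending only on $\eta$ and the doubling constants. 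Since $T'$ is isometric to $\R^2$, it follows that $T$ is quasisymmetric to $\R^2$ with distortion independent of the chosen weak tangent, which is exactly uniform quasisymmetric equivalence to $\R^2$. This transfer of the weak-tangent structure along $g$ is the technical heart of the argument: it rests on the connectivity of $S$ (to control images of balls under $g$ and $g^{-1}$) and on the doubling property (for precompactness of the rescalings in the pointed Gromov-Hausdorff topology), while every other assertion is immediate from Theorem~\ref{Heinonen_AR_2}, Section~5, and Propositions~\ref{Prop_4} and~\ref{HD_Prop_1}.
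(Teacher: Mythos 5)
Your proposal follows essentially the same route as the paper: invoke Heinonen's embedding theorem (Theorem~\ref{Heinonen_AR_2}) using the Assouad dimension bound from Proposition~\ref{Prop_4}, and then transport linear local contractibility, the failure of quasisymmetric equivalence to $\mathbb{S}^2$, and the weak-tangent structure across the resulting quasisymmetry. Two points where you go beyond the paper's own (very terse) proof are worth noting. First, the paper applies Theorem~\ref{Heinonen_AR_2} directly to the plane $X_2$ and so literally produces a $Q$-Ahlfors regular copy of $\R^2$, leaving the compactification to a sphere implicit; you instead apply it to the compact sphere $S$ from Theorem~\ref{Theorem}, which matches the statement more faithfully, at the price of having to estimate the Assouad dimension of $S$ (your finite-union/bi-Lipschitz argument needs a word about the radial warping $x\mapsto x/(1-|x|)$ and the metric transform $t\mapsto t/(1+t)$, but this is routine). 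Second, the paper simply asserts that every weak tangent of the image is $\eta$-quasisymmetric to $\R^2$; you sketch the standard rescaling-and-Gromov--Hausdorff-compactness argument that justifies the uniform distortion, which is the right (and only nontrivial) ingredient. One small caution: at points of $S$ lying in the warped region the weak tangent need not be \emph{isometric} to Euclidean $\R^2$ (the derivative of the radial map is not conformal), only uniformly bi-Lipschitz to it; this does not affect the conclusion, since uniform quasisymmetric equivalence is all that is claimed, but you should not assert isometry there.
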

\begin{proof}
Let $Q > 2$. By Proposition \ref{Prop_4}, the Assouad dimension of $X_2$ is $2$. Proposition \ref{Heinonen_AR_2} says that there exist a distortion function $\eta:[0,\infty) \to [0,\infty)$ and a $\eta$-quasisymmetry $\varphi:X_2 \to X'$, where $X'$ is a closed $Q$-Ahlfors regular subset of $\R^N$. By Proposition \ref{HD_Prop_2} and Proposition \ref{HD_Prop_1}(b), $X'$ is not quasisymmetric to the standard $2$-sphere, but every weak tangent of $X'$ is $\eta$-quasisymmetric to $(\R^2, 0)$.
\end{proof}

\bibliographystyle{alpha}
\bibliography{/home/angela/Desktop/Dropbox/Research/Recurrence_Self_Similar_Graphs/Biblography.bib}

\begin{thebibliography}{HKST15}

\bibitem[BBI01]{Burago}
D.~Burago, Y.~Burago, and S.~Ivanov.
\newblock {\em A Course in Metric Geometry}.
\newblock The American Mathematical Society, Providence, Rhode Island, 2001.

\bibitem[BK02]{BK02}
M~Bonk and B.~Kleiner.
\newblock Quasisymmetric parametrizations of two-dimensional metric spheres.
\newblock {\em Invent. Math.}, 150(1):127--183, 2002.

\bibitem[Don11]{LeDonne}
E.~Le Donne.
\newblock Metric spaces with unique tangents.
\newblock {\em Ann. Acad. Sci. Fenn. Math.}, 36:6830694, 2011.

\bibitem[GW18]{GW18}
L.~Geyer and K.~Wildrick.
\newblock Quantitative quasisymmetric uniformization of compact surfaces.
\newblock {\em Proc. Amer. Math. Soc.}, 146(1):281--293, 2018.

\bibitem[Hei01]{Heinonen}
J.~Heinonen.
\newblock {\em Lectures on Analysis on Metric Spaces}.
\newblock Springer-Verlag, New York, 2001.

\bibitem[HKST15]{The_Purple_Book}
J.~Heinonen, P.~Koskela, N.~Shanmugalingam, and J.T. Tyson.
\newblock {\em Sobolev Spaces on Metric Measure Spaces: An Approach Based on
  Upper Gradients}.
\newblock New Mathematical Monographs. Cambridge University Press, 2015.

\bibitem[Kin17]{Kinneberg17}
K.~Kinneberg.
\newblock Conformal dimension and boundaries of planar domains.
\newblock {\em Trans. Amer. Math. Soc.}, 369(9):6511--6536, 2017.

\bibitem[Lin17]{Lindquist17}
J.~Lindquist.
\newblock Weak capacity and critical exponents.
\newblock {\em arXiv preprint arXiv:1710.09181}, 2017.

\bibitem[Tys98]{Tyson98}
J.~Tyson.
\newblock Quasiconformality and quasisymmetry in metric measure spaces.
\newblock {\em Ann. Acad. Sci. Fenn. Math}, 23(2):525--548, 1998.

\end{thebibliography}

\end{document}